\documentclass[12pt,oneside]{amsart}

\usepackage{setspace,geometry}
\newtheorem{lem}{Lemma}
\newtheorem{thm}{Theorem}
\newtheorem{cor}{Corollary}

\usepackage{filecontents}
\usepackage{paralist}

\usepackage[backend=biber, datamodel=mrnumber, giveninits=true, url=false, maxbibnames=9, sorting=nyt]{biblatex}
\usepackage{hyperref}
\usepackage{tikz-cd}

\DeclareSourcemap{
  \maps[datatype=bibtex]{
    \map{
      \step[fieldsource=doi, final]
      \step[fieldset=isbn, null]
      \step[fieldset=issn, null]
    }
  }
}

\DeclareFieldFormat*{title}{\mkbibemph{#1}}
\DeclareFieldFormat*{journaltitle}{#1}
\DeclareFieldFormat*{booktitle}{#1}

\renewbibmacro{in:}{%
  \ifentrytype{article}
    {}
    {\printtext{\bibstring{in}\intitlepunct}}}

\DeclareFieldFormat[article,periodical]{volume}{\mkbibbold{#1}}
\DeclareFieldFormat[article,periodical]{number}{\bibstring{number}~#1}
\renewbibmacro*{journal+issuetitle}{%
  \usebibmacro{journal}%
  \setunit*{\addspace}%
  \iffieldundef{series}
    {}
    {\newunit
     \printfield{series}%
     \setunit{\addspace}}%
  \printfield{volume}%
  \setunit{\addspace}%
  \usebibmacro{issue+date}%
  \setunit{\addcolon\space}%
  \usebibmacro{issue}%
  \setunit{\addcomma\space}%
  \printfield{number}%
  \setunit{\addcomma\space}%
  \printfield{eid}%
  \newunit}
  
\DeclareFieldFormat[article,periodical]{pages}{#1}

\DeclareFieldFormat{mrnumber}{%
  MR\addcolon\space
  \ifhyperref
    {\href{https://mathscinet.ams.org/mathscinet/article?mr=#1}{\nolinkurl{#1}}}
    {\nolinkurl{#1}}}
    
\renewbibmacro*{doi+eprint+url}{%
  \iftoggle{bbx:doi}
    {\printfield{doi}}
    {}%
  \newunit\newblock
  \printfield{mrnumber}%
  \newunit\newblock
  \iftoggle{bbx:eprint}
    {\usebibmacro{eprint}}
    {}%
  \newunit\newblock
  \iftoggle{bbx:url}
    {\usebibmacro{url+urldate}}
    {}}

\begin{document}
\title[The trace simplex of a noncommutative Villadsen algebra]{The trace simplex of a\\ noncommutative Villadsen algebra}
\author{George~A. Elliott}
\address{Department of Mathematics\\ University of Toronto\\ Toronto, ON, Canada\ \ M5S 2E4}
\email{elliott@math.toronto.edu}
\author{Vincent~M. Ruzicka}
\address{Department of Mathematics and Statistics\\
University of Wyoming\\
Laramie, WY 82071\\
USA}
\email{vruzicka@uwyo.edu}
\date{\today}

\subjclass[2020]{}
\keywords{}

\begin{abstract}
	We construct a ``noncommutative'' Villadsen algebra $B$ and show that, given an extreme tracial state $\nu$ on its canonical AF subalgebra, the subset of $T(B)$ consisting of those tracial states that equal $\nu$ when restricted to the canonical AF subalgebra is the Poulsen simplex. In particular, if the canonical AF subalgebra has a unique trace, then $T(B)$ is the Poulsen simplex. We go on to show that in certain instances, the tracial cone of a ``classical'' AF-Villadsen algebra $D$ is isomorphic to the tracial cone of the algebra obtained from $D$ by deleting all point evaluations. 
\end{abstract}
\maketitle

\section{Introduction}\label{S:intro}

	Villadsen algebras (of the first type) were introduced in \cite{jV98} (they are not to be confused with those of the second type, introduced in \cite{jV99}). Progress on the classification of these algebras was made in \cite{gaEcgLzN24} and \cite{gaEzN25-Vill}. Moreover, in \cite[Theorem 4.5]{gaEcgLzN24} it was shown that the simplex of tracial states of a Villadsen algebra is the Poulsen simplex when the seed space is not a single point. In the present paper, we construct ``noncommutative'' Villadsen algebras and deduce from our main result (Theorem \ref{T:main}) that, under certain conditions (see the final paragraph of this section), the simplex of tracial states of such an algebra is also the Poulsen simplex. 
	
	An example of this ``noncommutative'' construction is as follows. Let $C_0$ be a nuclear unital C*-algebra, and consider the inductive sequence 
\begin{align}\label{E:introSequenceUHF}
	M_2(C_0) \xrightarrow{\phi_1} M_4(C_0\otimes C_0) \xrightarrow{\phi_2} M_8\big (C_0^{\otimes 4} \big ) \xrightarrow{\phi_3} \cdots,
\end{align}
where the seed for the $i$-th stage map $\phi_i$ is
\begin{align*}
	C_0^{\otimes 2^{i-1}} \ni c \mapsto \begin{pmatrix}
		c\otimes 1&0\\
		0&1\otimes c
	\end{pmatrix} \in M_2\big ( C_0^{\otimes 2^{i}}\big ),
\end{align*}
and $1$ denotes the unit of $C_0^{\otimes 2^{i-1}}$. In the case that $C_0$ is commutative, i.e., $C_0 = C(X)$, using the usual identification of $C(X) \otimes C(X)$ with $C(X^2)$ one sees that the limit $B$ of \eqref{E:introSequenceUHF} is a Villadsen algebra---nonsimple unless $X$ is a point, since we have not introduced point evaluations. On the other hand, when $C_0$ is noncommutative, we call $B$ a \textit{noncommutative} Villadsen algebra. In analogy with the commutative case, we call $C_0$ the ``seed algebra'' of $B$. 

	More generally, in this paper we consider noncommutative AF-Villadsen algebras, i.e., limits of finite direct sums of matrix algebras over tensor powers of a seed algebra (examples of traditional ``commutative'' AF-Villadsen algebras were given in \cite{iHncP26}, and a classification result for such algebras with a fixed well-behaved seed space was obtained in \cite{gaEzN25-Vill}). 
	
	For example, consider the limit $B$ of the inductive sequence
\begin{align*}
	M_2(C_0) \oplus M_2(C_0) \xrightarrow{\phi_1} M_4(C_0\otimes C_0) \oplus M_4(C_0\otimes C_0) \xrightarrow{\phi_2} M_8\big (C_0^{\otimes 4} \big ) \oplus M_8\big (C_0^{\otimes 4} \big ) \xrightarrow{\phi_3} \cdots,
\end{align*}
where $\phi_i$ is defined by
\begin{multline*}
	C_0^{\otimes 2^{i-1}}\oplus C_0^{\otimes 2^{i-1}} \ni (c_1,c_2) \mapsto \\ (\begin{pmatrix}
		c_1\otimes 1&0\\
		0&1\otimes c_2
	\end{pmatrix}, \begin{pmatrix}
		c_1\otimes 1&0\\
		0&1\otimes c_2
	\end{pmatrix}) \in M_2\big ( C_0^{\otimes 2^{i}}\big ) \oplus M_2\big ( C_0^{\otimes 2^{i}}\big ). 
\end{multline*}
Then $B$ is a noncommutative AF-Villadsen algebra, with seed algebra $C_0$. As in the commutative case, we use the term ``noncommutative Villadsen algebra'' (without the ``AF-'' prefix) to describe this more general construction as well. Notice that $B$ contains as a subalgebra the limit $A$ of the inductive sequence
\begin{align*}
	M_2(\mathbb C) \oplus M_2(\mathbb C) \xrightarrow{\phi_1'} M_4(\mathbb C) \oplus M_4(\mathbb C) \xrightarrow{\phi_2'} M_8 (\mathbb C) \oplus M_8 (\mathbb C ) \xrightarrow{\phi_3'} \cdots,
\end{align*}
where $\phi_i'$ is the restriction of $\phi_i$ to $M_{2^i}(\mathbb C) \oplus M_{2^i}(\mathbb C)$. We call $A$ the canonical AF subalgebra of $B$. 

	It follows from our main result that if the seed algebra of a given noncommutative Villadsen algebra $B$ has more than one trace and if the canonical AF subalgebra of $B$ is simple and has a unique trace, then the simplex of tracial states on $B$ is the Poulsen simplex, i.e., the unique simplex for which the extreme points are dense (Corollary \ref{C:main}). 
	
	This paper proceeds as follows. In the next section, we define noncommutative Villadsen algebras and show that one may assume all connecting maps are canonical in the sense of Equation \eqref{E:composed}. Then, letting $B$ denote a noncommutative Villadsen algebra and $A$ denote its canonical AF subalgebra, in Section \ref{S:trace} we show that each trace on $A$ extends to a trace on $B$ (Lemma \ref{L:traceExtends}), and furthermore, that if $\nu$ is an extreme trace on $A$, then the fiber over $\nu$ (i.e., the set of traces on $B$ which restrict to $\nu$ on $A$) is a face of $T(B)$ (Lemma \ref{L:fiberIsCompact}). In the final section of this paper, we prove the main result, and we extend the main result to the AF-Villadsen algebras of \cite{gaEzN25-Vill}; we accomplish this by showing that if $D$ is an AF-Villadsen algebra and $B$ is the AF-Villadsen algebra obtained from $D$ by deleting all point evaluations, then under the assumption of uniform convergence of a certain sequence in the affine function space over the state space of the $K_0$-group of the canonical AF subalgebra of $B$, the tracial cones of $D$ and $B$ are isomorphic (and in some cases, the tracial simplexes themselves are isomorphic; see Theorem \ref{T:traceIntertwining}). 

\section{A noncommutative Villadsen algebra construction}\label{S:AFVill}

	Consider the following inductive sequence of ordered abelian groups with distinguished order units: 
\begin{align}\label{E:dimensionGroupSequence}
	(G_i,\theta_i)_{i\in \mathbb N},\quad G_i = \big (\mathbb Z^{j_i},\mathbb Z^{j_i}_{+},(n_{i,1},\dotsc,n_{i,j_i}) \big ),
\end{align}
where $\theta_i$ is determined by the multiplicity matrix $[\theta_{i;k,l}]$, $1\leq k \leq j_{i}$, $1 \leq l \leq j_{i+1}$. Assume that an AF algebra whose K-theory is given by the limit of the above sequence is infinite-dimensional. 

	For each $i \in \mathbb N$ and $1\leq l \leq j_{i+1}$, choose disjoint sets $P_{i;1,l},P_{i;2,l},\dotsc,P_{i;j_i,l}$ that partition the set of integers $\{1,2,\dotsc,n_{i+1,l}\}$ and are such that $|P_{i;k,l}|=\theta_{i;k,l}n_{i,k}$, i.e.,
\begin{align*}
	\bigsqcup_{k=1}^{j_i} P_{i;k,l} = \{1,2,\dotsc,n_{i+1,l}\}, \quad 1 \leq l \leq j_{i+1};
\end{align*}
then partition each $P_{i;k,l}$ into disjoint sets $P_{i;k,l}^{(1)},P_{i;k,l}^{(2)},\dotsc,P_{i;k,l}^{(\theta_{i;k,l})}$ each of cardinality $n_{i,k}$, and fix an enumeration $p_{i;k,l}^{(m,1)},p_{i;k,l}^{(m,2)},\dotsc,p_{i;k,l}^{(m,n_{i,k})}$ of each $P_{i;k,l}^{(m)}$. Denote the set of these sets with the fixed enumerations by $\mathcal P$, i.e.,
\begin{align}\label{E:partition}
	\mathcal P = \Big \{P_{i;k,l}^{(m)} = \{p_{i;k,l}^{(m,1)},\dotsc,p_{i;k,l}^{(m,n_{i,k})}\} \mid i \in \mathbb N, 1 \leq k \leq j_i, 1 \leq l \leq j_{i+1}, 1 \leq m \leq \theta_{i;k,l}\Big \};
\end{align}
let us call $\mathcal P$ a partition for $(G_i,\theta_i)$. 

	Now let $C_0$ be a nuclear unital C*-algebra, and construct a unital C*-algebra $B( (G_i,\theta_i),C_0,\mathcal P)$ as follows. For each $i \in \mathbb N$, let 
\begin{align*}
	C_{i,k} = C_0^{\otimes n_{i,k}},\ B_{i,k} = M_{n_{i,k}}(\mathbb C)\otimes C_{i,k} \cong M_{n_{i,k}}(C_{i,k}),\quad \ 1 \leq k \leq j_i,
\end{align*}
and let
\begin{align}\label{E:buildingBlock}
	B_i = \bigoplus_{k=1}^{j_i} B_{i,k} = \bigoplus_{k=1}^{j_i} M_{n_{i,k}} (C_{i,k}  ) = \bigoplus_{k=1}^{j_i} M_{n_{i,k}} \big (C_0^{\otimes n_{i,k}}  \big ).
\end{align}
Define the seed of an injective unital *-homomorphism $\phi_{i} \colon B_i \to B_{i+1}$ (up to unitary equivalence) by
\begin{multline}\label{E:connectingMap}
	\bigoplus_{k=1}^{j_i} C_0^{\otimes n_{i,k}} \ni \bigoplus_{k=1}^{j_i} \bigotimes_{t=1}^{n_{i,k}} c_t^{(k)} \mapsto \\ \bigoplus_{l=1}^{j_{i+1}} \text{diag} \big ( \bigotimes_{s=1}^{n_{i+1,l}} d_s^{(1,l,1)},\dotsc,\bigotimes_{s=1}^{n_{i+1,l}} d_s^{(1,l,\theta_{i;1,l})},\dotsc, \bigotimes_{s=1}^{n_{i+1,l}} d_s^{(j_i,l,1)},\dotsc,\bigotimes_{s=1}^{n_{i+1,l}} d_s^{(j_i,l,\theta_{i;j_i,l})}\big )   \\ \in \bigoplus_{l=1}^{j_{i+1}} M_{\theta_{i;1,l}+\theta_{i;2,l}+\cdots+\theta_{i;j_i,l}}\big (C_0^{\otimes n_{i+1,l}}\big ),
\end{multline}
where
\begin{align*}
	d_{s}^{(k,l,m)} = \begin{cases}
		c_t^{(k)},& s=p_{i;k,l}^{(m,t)}\\
		1,& s \not \in P_{i;k,l}^{(m)}
	\end{cases},\quad 1 \leq s \leq n_{i+1,l},\ 1 \leq m \leq \theta_{i;k,l},\ 1 \leq l \leq j_{i+1},\ 1 \leq k \leq j_i.
\end{align*}
Then define $B((G_i,\theta_i),C_0,\mathcal P)$ to be the limit of the inductive sequence $(B_i,\phi_i)_{i\in \mathbb N}$. 

	As alluded to in Section \ref{S:intro}, in the case that the seed algebra $C_0$ is commutative, this construction yields a traditional ``commutative'' Villadsen algebra. 
On the other hand, if the seed algebra is noncommutative, then $B((G_i,\theta_i),C_0,\mathcal P)$ will be called a \textit{noncommutative} Villadsen algebra. It turns out that $B((G_i,\theta_i),C_0,\mathcal P)$ is independent of the partition $\mathcal P$ for $(G_i,\theta_i)$ as the following lemma shows. Hence, we may write $B((G_i,\theta_i),C_0,\mathcal P) = B((G_i,\theta_i),C_0)$. 

\begin{lem}\label{L:composed}
	Let $(G_i,\theta_i)_{i\in \mathbb N}$ be as in Equation \eqref{E:dimensionGroupSequence}, $\mathcal P$ be as in Equation \eqref{E:partition}, 
\begin{align*}
	\mathcal Q = \Big \{Q_{i;k,l}^{(m)} = \{q_{i;k,l}^{(m,1)},\dotsc,q_{i;k,l}^{(m,n_{i,k})}\} \mid i \in \mathbb N, 1 \leq k \leq j_i, 1 \leq l \leq j_{i+1}, 1 \leq m \leq \theta_{i;k,l}\Big \}
\end{align*}
be another partition for $(G_i,\theta_i)$, and $C_0$ be a nuclear unital C*-algebra. Then 
\begin{align*}
	B((G_i,\theta_i),C_0,\mathcal P) \cong B((G_i,\theta_i),C_0,\mathcal Q).
\end{align*}  
\end{lem}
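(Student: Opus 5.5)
Two choices of partition differ only by permutations of tensor factors and of diagonal blocks, so the plan is to absorb these permutations into automorphisms of the building blocks and obtain a commuting ladder of isomorphisms.

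For each $i$ and $1\le k\le j_i$, a permutation $\sigma\in S_{n_{i,k}}$ induces the flip automorphism $\alpha_\sigma$ of $C_{i,k}=C_0^{\otimes n_{i,k}}$, given on elementary tensors by $\bigotimes_t c_t\mapsto \bigotimes_t c_{\sigma^{-1}(t)}$. It is well defined and isometric: on the maximal tensor product by the universal property, and on any tensor product since $C_0$ is nuclear. Extending entrywise gives an automorphism of $M_{n_{i,k}}(C_{i,k})$. I will construct automorphisms $\beta_i$ of $B_i$, each a direct sum over $k$ of such flips composed with conjugation by permutation unitaries in $M_{n_{i,k}}(\mathbb C)$, so that $\phi^{\mathcal Q}_i\circ\beta_i=\beta_{i+1}\circ\phi^{\mathcal P}_i$ up to inner automorphism.

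The construction is inductive, with $\beta_1=\mathrm{id}$. Suppose $\beta_i$ is defined and is of the stated form, with tensor permutations $\sigma_{i,k}$. Fix $l$. The $\mathcal P$-seed sends the $(k,m)$ block to the tensor factors indexed by $P^{(m)}_{i;k,l}$, using the enumeration $t\mapsto p^{(m,t)}_{i;k,l}$. The map $\phi^{\mathcal Q}_i\circ\beta_i$ does the same, except that it uses $Q^{(m)}_{i;k,l}$ and the enumeration $t\mapsto q^{(m,\sigma_{i,k}(t))}_{i;k,l}$, possibly up to the inverse of $\sigma_{i,k}$ depending on conventions. Define $\sigma_{i+1,l}\in S_{n_{i+1,l}}$ by $p^{(m,t)}_{i;k,l}\mapsto q^{(m,\sigma_{i,k}(t))}_{i;k,l}$ for all $k,m,t$. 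This is a bijection because both families partition $\{1,\dots,n_{i+1,l}\}$ with matching block sizes $n_{i,k}$. With this choice, $\alpha_{\sigma_{i+1,l}}$ carries each $\mathcal P$-diagonal entry $\bigotimes_s d_s^{(k,l,m)}$ to the corresponding $\mathcal Q$-entry for $\beta_i(c)$, since factors equal to $1$ go to $1$.

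The remaining issue is that $\phi_i$ is only specified up to unitary equivalence. Its seed on $\bigoplus_k C_{i,k}$ determines it on $B_i=\bigoplus_k M_{n_{i,k}}\otimes C_{i,k}$ by tensoring with the standard embedding $M_{n_{i,k}}\to M_{n_{i+1,l}}$ in the corresponding positions. The matrix part of $\beta_i$ (permutation conjugations) goes to a unitary conjugation in $B_{i+1}$ by a unitary with scalar entries. Define $\beta_{i+1}$ as the flip $\alpha_{\sigma_{i+1,l}}$ on each summand composed with this conjugation, together with the unitaries fixing the two realizations of $\phi^{\mathcal P}_i,\phi^{\mathcal Q}_i$. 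Then $\phi^{\mathcal Q}_i\circ\beta_i=\beta_{i+1}\circ\phi^{\mathcal P}_i$ holds exactly, and the inductive form of $\beta_{i+1}$ is preserved.

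The step I expect to be the main obstacle is this bookkeeping: checking that the flip commutes with scalar-matrix conjugations and with the matrix-unit embedding, so that everything fits into one automorphism of $B_{i+1}$. I would do this by writing $B_{i+1,l}=M_{n_{i+1,l}}(\mathbb C)\otimes C_{i+1,l}$ and noting that the flips act only on the second factor while the scalar unitaries act only on the first, so the two commute. Granting this, the exactly commuting diagram of isomorphisms $\beta_i$ induces an isomorphism of the inductive limits, which gives $B((G_i,\theta_i),C_0,\mathcal P)\cong B((G_i,\theta_i),C_0,\mathcal Q)$. If one only wants approximate commutation, Elliott's intertwining argument would also suffice.
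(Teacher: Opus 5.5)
Your proposal is correct and is essentially the paper's proof. The paper also builds, recursively, automorphisms $\beta_i$ of $B_i$ induced by permutations of tensor factors, choosing the permutation of $\{1,\dotsc,n_{i+1,l}\}$ to carry each $P^{(m)}_{i;k,l}$ onto $Q^{(m)}_{i;k,l}$ compatibly with the previous stage's permutations (up to the convention for how a permutation acts), so that the ladder commutes exactly. Your extra bookkeeping with scalar permutation unitaries is harmless but unnecessary once both connecting maps are realized with the same $(k,m)$ block ordering given by the seed: starting from $\beta_1=\mathrm{id}$, all those unitaries are trivial, and the $\beta_i$ are pure flips, which is what the paper uses.
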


\begin{proof}
	Let $B((G_i,\theta_i),C_0,\mathcal P)$ and $B((G_i,\theta_i),C_0,\mathcal Q)$ denote the limits of the sequences $(B_i,\phi_i)_{i\in \mathbb N}$ and $(B_i,\psi_i)_{i\in \mathbb N}$, with $B_i$ defined as in Equation \eqref{E:buildingBlock} and $\phi_i$ and $\psi_i$ defined according to Equation \eqref{E:connectingMap}. Fix $i \in \mathbb N$, and let $\sigma_k$ be a permutation of $\{1,\dotsc,n_{i,k}\}$ for each $1\leq k \leq j_i$; then $\sigma_1,\dotsc,\sigma_{j_i}$ together induce a *-isomorphism $\sigma\colon B_i\to B_{i}$ with seed
\begin{align*}
	\bigoplus_{k=1}^{j_i} C_0^{\otimes n_{i,k}} \ni \bigoplus_{k=1}^{j_i} \bigotimes_{t=1}^{n_{i,k}} c_t^{(k)} \mapsto \bigoplus_{k=1}^{j_i} \bigotimes_{t=1}^{n_{i,k}} c_{\sigma_k(t)}^{(k)} \in \bigoplus_{k=1}^{j_i} C_0^{\otimes n_{i,k}}.
\end{align*}

	Keeping $i$ fixed, let $\gamma_l$ be a permutation of $\{1,2,\dotsc,n_{i+1,l}\}$ for each $1 \leq l \leq j_{i+1}$ such that $\gamma_l(q_{i;k,l}^{(m,t)}) = p_{i;k,l}^{(m,\sigma_k(t))}$ for each $1 \leq t \leq n_{i,k}$, $1 \leq m \leq \theta_{i;k,l}$, and $1 \leq k \leq j_i$ (and such that when $s \not \in Q_{i;k,l}$, $\gamma_l(s) \not \in P_{i;k,l}$). Then a straightforward calculation shows that the isomorphism $\gamma\colon B_{i+1} \to B_{i+1}$ induced by $\gamma_1,\dotsc,\gamma_{j_{i+1}}$ with seed
\begin{align*}
	\bigoplus_{l=1}^{j_{i+1}} C_0^{\otimes n_{i+1,l}} \ni \bigoplus_{l=1}^{j_{i+1}} \bigotimes_{t=1}^{n_{i+1,l}} d_t^{(l)} \mapsto \bigoplus_{l=1}^{j_{i+1}} \bigotimes_{t=1}^{n_{i+1,l}} d_{\gamma_l(t)}^{(l)} \in \bigoplus_{l=1}^{j_{i+1}} C_0^{\otimes n_{i+1,l}}
\end{align*}
makes the diagram
\[\begin{tikzcd}[cramped]
	{B_i} & {B_{i+1}}\\
	{B_i} & {B_{i+1}}
	\arrow["{\phi_i}", from=1-1, to=1-2]
	\arrow["{\sigma}"', from=1-1, to=2-1]
	\arrow["{\gamma}", from=1-2, to=2-2]
	\arrow["{\psi_i}"', from=2-1, to=2-2]
\end{tikzcd}\]
commute. 

	It follows that one may choose a sequence of isomorphisms $\beta_i\colon B_i \to B_i$, each induced by permutations of $\{1,2,\dotsc,n_{i,k}\}$ respectively, such that the diagram
\[\begin{tikzcd}
	{B_1} & {B_2} & {B_3} & \cdots \\
	{B_1} & {B_2} & {B_3} & \cdots
	\arrow["{\phi_1}", from=1-1, to=1-2]
	\arrow["{\beta_1}"', from=1-1, to=2-1]
	\arrow["{\phi_2}", from=1-2, to=1-3]
	\arrow["{\beta_2}"', from=1-2, to=2-2]
	\arrow["{\phi_3}", from=1-3, to=1-4]
	\arrow["{\beta_3}"', from=1-3, to=2-3]
	\arrow["{\psi_1}"', from=2-1, to=2-2]
	\arrow["{\psi_2}"', from=2-2, to=2-3]
	\arrow["{\psi_3}"', from=2-3, to=2-4]
\end{tikzcd}\]
commutes. This proves that $B((G_i,\theta_i),C_0,\mathcal P) \cong B((G_i,\theta_i),C_0,\mathcal Q)$ as asserted.	
\end{proof}

	Note that in the case of a noncommutative UHF-Villadsen algebra, such as the one given by the limit of Equation \eqref{E:introSequenceUHF}, this lemma is almost obvious since one need only permute diagonal elements to go from one partition to another. 

	Fix $i,t \in \mathbb N$, and denote the multiplicity matrix for the composed map 
\begin{align*}
	\theta_{i,i+t-1} = \theta_{i+t-1}\circ \cdots \circ \theta_{i}\colon G_i \to G_{i+t}
\end{align*} 
by $[\theta_{i,i+t-1;k,l}]$, $1\leq k \leq j_i$, $1 \leq l \leq j_{i+t}$. Notice that $\theta_{i,i;k,l} = \theta_{i;k,l}$ and
\begin{align*}
	\theta_{i,i+t-1;k,l} = \sum_{m=1}^{j_{i+t-1}} \theta_{i+t-1;m,l} \theta_{i,i+t-2;k,m}.
\end{align*} 
The takeaway from Lemma \ref{L:composed} is that we may assume the composed map $\phi_{i,i+t-1}\colon B_{i} \to B_{i+t}$ is canonical in the sense that the seed is of the form
\begin{align}\label{E:composed}
	\bigoplus_{k=1}^{j_i} C_{i,k} \ni (c_1,&\dotsc,c_{j_i}) \mapsto \\ \notag \bigoplus_{l=1}^{j_{i+t}} \text{diag} \Big ( &\underbrace{c_1 \otimes 1_{i,1}\otimes \cdots \otimes 1_{i,1}}_{\theta_{i,i+t-1;1,l}}\otimes \cdots \otimes \underbrace{1_{i,j_i}\otimes \cdots \otimes 1_{i,j_i}}_{\theta_{i,i+t-1;j_i,l}},\dotsc,\\  \notag &\quad \underbrace{1_{i,1}\otimes \cdots \otimes 1_{i,1}\otimes c_1}_{\theta_{i,i+t-1;1,l}} \otimes \cdots \otimes \underbrace{1_{i,j_i}\otimes \cdots \otimes 1_{i,j_i}}_{\theta_{i,i+t-1;j_i,l}},\dotsc,\\ \notag
	&\qquad \qquad\underbrace{1_{i,1}\otimes \cdots \otimes 1_{i,1}}_{\theta_{i,i+t-1;1,l}}\otimes \cdots \otimes \underbrace{c_{j_i}\otimes 1_{i,j_i} \otimes \cdots \otimes 1_{i,j_i}}_{\theta_{i,i+t-1;j_i,l}},\dotsc,\\ \notag &\qquad \qquad\quad \underbrace{1_{i,1}\otimes \cdots \otimes 1_{i,1}}_{\theta_{i,i+t-1;1,l}}\otimes \cdots \otimes \underbrace{1_{i,j_i} \otimes \cdots \otimes 1_{i,j_i} \otimes c_{j_i}}_{\theta_{i,i+t-1;j_i,l}} \Big ) \\ &\qquad \qquad \qquad \qquad \qquad \qquad \qquad  \in \bigoplus_{l=1}^{j_{i+t}} M_{\theta_{i,i+t-1;1,l}+\cdots+\theta_{i,i+t-1;j_i,l}}\big (C_{i+t,l}\big ),\notag
\end{align}
where $1_{i,k}$ is the identity of $C_{i,k}$. 

\section{The trace simplex}\label{S:trace}

	 Given a convex subset $K$ of a topological vector space, denote the set of its extreme points by $\partial K$ and its closure by $\overline{K}$. In this section, fix a C*-algebra $B := B((G_i,\theta_i),C_0)$ obtained from the inductive sequence $(B_i,\phi_i)_{i\in \mathbb N}$, where $(G_i,\theta_i)_{i\in \mathbb N}$ is as in Equation \eqref{E:dimensionGroupSequence}, $B_i$ is as in Equation \eqref{E:buildingBlock}, $\phi_i$ is as in Equation \eqref{E:connectingMap}, and $C_0$ is a nuclear unital C*-algebra. In this paper, when discussing traces on a unital C*-algebra, we mean tracial \textit{states}. 

	The (Choquet) simplex of tracial states, or trace simplex, $T(B)$ of $B$ is (affinely homeomorphic to) the limit of the affine projective system
\begin{align*}
	T(B_1)\xleftarrow{\phi_1^*} T(B_2) \xleftarrow{\phi_2^*}T(B_{3}) \xleftarrow{\phi_3^*} \cdots,\quad \phi_{i}^*(\tau) = \tau \circ \phi_i.
\end{align*}
Hence, a trace $\tau \in T(B)$ is uniquely represented by a sequence $(\tau_i)_{i\in \mathbb N}$ with $\tau_i \in T(B_i)$ and $\phi_i^*(\tau_{i+1}) = \tau_i$; moreover, for each $i \in \mathbb N$, there exist scalars $0 \leq \lambda_1^{(i)},\dotsc,\lambda_{j_i}^{(i)} \leq 1$ summing to one such that 
\begin{align*}
	\tau_{i} = \lambda_1^{(i)} \tau_1^{(i)} + \cdots + \lambda_{j_i}^{(i)} \tau_{j_i}^{(i)}, \quad \tau_{k}^{(i)} \in T(C_{i,k})
\end{align*}
(note that we are making the canonical identifications of $T(C_{i,k})$ with $T(B_{i,k})$ and of $T(B_{i,k})$ with $T(B_{i,k})\circ \pi_k \subseteq T(B_i)$). In this way, we associate to $\tau$ a sequence of tuples of scalars $((\lambda_{1}^{(i)},\dotsc,\lambda_{j_i}^{(i)}))_{i\in \mathbb N}$ and a sequence of tuples of traces $((\tau_{1}^{(i)},\dotsc,\tau_{j_i}^{(i)}))_{i\in \mathbb N}$. If we wish to specify this information when discussing $\tau$, we shall write $\tau = (\tau_i;\lambda_k^{(i)},\tau_k^{(i)})$. 

	Using Lemma \ref{L:composed} (more specifically Equation \eqref{E:composed}), a calculation reveals that, for any $t \in\mathbb N$, the composed map $\phi_{i,i+t-1}^*\colon T(B_{i+t}) \to T(B_i)$ has a seed of the form 
\begin{align}\label{E:traceImage}
	\lambda_{1}^{(i+t)}\tau_{1}^{(i+t)}+\cdots+\lambda_{j_{i+t}}^{(i+t)}\tau_{j_{i+t}}^{(i+t)} \mapsto \sum_{l=1}^{j_{i+t}} \sum_{k=1}^{j_i}  \sum_{m=1}^{\theta_{i,i+t-1;k,l}} \frac{\lambda_l^{(i+t)} n_{i,k}}{n_{i+t,l}} \mu_{k,l,m}  = \lambda_{1}^{(i)}\tau_{1}^{(i)}+\cdots+\lambda_{j_{i}}^{(i)}\tau_{j_{i}}^{(i)},
\end{align}
where $\mu_{k,l,m} \in T(C_{i,k})$ is defined by
\begin{align}\label{E:traceImageParts}
	\mu_{k,l,m} (c ) = \tau_l^{(i+t)}\Big (\bigotimes_{s=1}^{k-1} \big ( 1_{i,s}^{\otimes \theta_{i,i+t-1;s,l}} \big ) \otimes 1_{i,k}^{\otimes (m-1)}  \otimes  c \otimes  1_{i,k}^{\otimes (\theta_{i,i+t-1;k,l}-m)}  \otimes \bigotimes_{s=k+1}^{j_i} \big (1_{i,s}^{\otimes \theta_{i,i+t-1;s,l}} \big ) \Big )
\end{align}
(notice that $\phi_i^* = \phi_{i,i}^*$). It follows that (assuming $\lambda_k^{(i)} \not = 0$)
\begin{align}\label{E:traceImageTraces}
	\tau_k^{(i)} = \frac{1}{\lambda_k^{(i)}}\sum_{l=1}^{j_{i+t}}\sum_{m=1}^{\theta_{i,i+t-1;k,l}} \frac{\lambda_l^{(i+t)} n_{i,k}}{n_{i+t,l}} \mu_{k,l,m} ,
\end{align}
and hence
\begin{align}\label{E:coefficientImage}
	\lambda_{k}^{(i)} = \sum_{l=1}^{j_{i+t}} \frac{\lambda_l^{(i+t)}n_{i,k}\theta_{i,i+t-1;k,l}}{n_{i+t,l}}. 
\end{align}

	It is clear that $B$ contains as a subalgebra the limit $A$ of the inductive sequence
\begin{align*}
	\bigg ( \bigoplus_{k=1}^{j_i}M_{n_{i,k}}(\mathbb C), \phi_i' \bigg )_{i\in \mathbb N},
\end{align*}
where $\phi_i'$ is the restriction of $\phi_i$ to $\bigoplus_{1\leq k \leq j_i}M_{n_{i,k}}(\mathbb C)$. We call $A$ the \textit{canonical AF subalgebra} of $B$. Of course, a trace $\nu \in T(A)$ is specified by a triple $(\nu_i;\alpha_{k}^{(i)},\text{Tr}_k^{(i)})$, where $\text{Tr}_k^{(i)}$ denotes the (normalized) trace on $M_{n_{i,k}}(\mathbb C)$.

\begin{lem}\label{L:traceExtends}
	Let $\nu = (\nu_i;\alpha^{(i)}_k,\text{Tr}^{(i)}_k) \in T(A)$, and let $T(C_0)$ be nonempty. Then there exists a trace in $T(B)$ whose associated sequence of tuples of scalars is $((\alpha^{(i)}_1,\dotsc,\alpha^{(i)}_{j_i}))_{i\in \mathbb N}$. 
	
	In particular, each trace on $A$ extends to a trace on $B$. 
\end{lem}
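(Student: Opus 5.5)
Fix a trace $\rho \in T(C_0)$, which exists since $T(C_0)$ is nonempty. For each $i$ and $k$ let $\rho_{i,k} = \rho^{\otimes n_{i,k}}$ be the product trace on $C_{i,k} = C_0^{\otimes n_{i,k}}$. Since $C_0$ is nuclear, the tensor product is unambiguous, and the product state $\rho^{\otimes n}$ is a tracial state on $C_0^{\otimes n}$: the trace property holds on elementary tensors and extends by linearity and continuity. We then define
\begin{align*}
\tau_i = \alpha_1^{(i)}\rho_{i,1} + \cdots + \alpha_{j_i}^{(i)} \rho_{i,j_i} \in T(B_i),
\end{align*}
using the canonical identifications of $T(C_{i,k})$ with $T(B_{i,k})$, i.e.\ $\tau_i$ is $\sum_k \alpha_k^{(i)} (\mathrm{Tr}_k^{(i)}\otimes \rho_{i,k})\circ\pi_k$.

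The plan is to check that $\phi_i^*(\tau_{i+1}) = \tau_i$ for every $i$. The sequence $(\tau_i)$ then defines a trace $\tau \in T(B)$ via the projective limit description of $T(B)$, and its associated scalars are $\alpha_k^{(i)}$ by construction. To check compatibility, use Equations \eqref{E:traceImage} and \eqref{E:traceImageParts} with $t=1$ (legitimate by Lemma \ref{L:composed}), taking $\lambda_l^{(i+1)} = \alpha_l^{(i+1)}$ and $\tau_l^{(i+1)} = \rho_{i+1,l}$. The key observation is that each $\mu_{k,l,m}$ is obtained by evaluating $\rho^{\otimes n_{i+1,l}}$ on a tensor with $c$ in one block of $n_{i,k}$ consecutive factors and units elsewhere. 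Since $\rho(1)=1$, this gives $\mu_{k,l,m} = \rho^{\otimes n_{i,k}} = \rho_{i,k}$ for all $l,m$. The image therefore has $k$-th component
\begin{align*}
\Big(\sum_{l=1}^{j_{i+1}} \frac{\alpha_l^{(i+1)} n_{i,k}\theta_{i;k,l}}{n_{i+1,l}}\Big)\rho_{i,k}.
\end{align*}
It remains to see that the coefficient equals $\alpha_k^{(i)}$. This is the scalar part of the same computation applied to $\nu$ on $A$. Indeed, restricting $\phi_i$ to the matrix algebras gives $\phi_i'$, and the identical formula (with $C_0$ replaced by $\mathbb C$) shows that $\nu_{i+1}\circ\phi_i' = \nu_i$ forces precisely the relation \eqref{E:coefficientImage} among the $\alpha$'s. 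Hence $\phi_i^*(\tau_{i+1}) = \tau_i$.

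For the ``in particular'' statement, note that $A \subseteq B$ is the inductive limit of the subalgebras $\bigoplus_k M_{n_{i,k}}(\mathbb C)\otimes 1 \subseteq B_i$. On this subalgebra $\tau_i$ restricts to $\sum_k \alpha_k^{(i)}\mathrm{Tr}_k^{(i)} = \nu_i$. Therefore $\tau|_A = \nu$ on a dense subalgebra, and hence on all of $A$.

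The only step needing care is the verification that $\mu_{k,l,m}=\rho_{i,k}$, together with confirming that \eqref{E:coefficientImage} holds for the $\alpha$'s coming from an arbitrary trace on $A$. I expect neither to be a real obstacle: the product-trace choice is exactly what makes the tensor-factor bookkeeping in \eqref{E:traceImageParts} collapse. Nuclearity is used only to avoid any ambiguity about the tensor norm when forming product states.
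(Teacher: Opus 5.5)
Your proof is correct and follows the paper's approach. Like the paper, you build compatible traces $\tau_i\in T(B_i)$ with the prescribed weights $\alpha_k^{(i)}$ out of tensor products of traces, and you verify $\phi_i^*(\tau_{i+1})=\tau_i$ via \eqref{E:traceImage}--\eqref{E:coefficientImage}.

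Your uniform choice $\rho^{\otimes n_{i,k}}$ is the special case of the paper's recursion $\tau^{(i+1)}_l=\bigotimes_k(\tau^{(i)}_k)^{\otimes\theta_{i;k,l}}$ with $\tau^{(1)}_k=\rho^{\otimes n_{1,k}}$; the paper allows arbitrary $\tau^{(1)}_k\in T(C_{1,k})$. The permutation invariance of $\rho^{\otimes n}$ makes the partition bookkeeping irrelevant. Your derivation of the coefficient identity from $\nu_{i+1}\circ\phi_i'=\nu_i$ also makes explicit a step the paper leaves implicit.
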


\begin{proof}
	By assumption, there exist traces $\tau^{(1)}_1,\dotsc,\tau^{(1)}_{j_{1}}$ in $T(C_{1,1}),\dotsc,T(C_{1,j_{1}})$, respectively. Define the trace
\begin{align*}
	\tau_1 := \alpha_1^{(1)} \tau^{(1)}_1 +\cdots + \alpha_{j_{1}}^{(1)}\tau^{(1)}_{j_{1}} \in T(B_1).
\end{align*}
Now, for each $i \in \mathbb N$, recursively define the trace
\begin{align*}
	\tau_{i+1} := \alpha_1^{(i+1)} \tau^{(i+1)}_{1} +\cdots + \alpha_{j_{i+1}}^{(i+1)}\tau^{(i+1)}_{j_{i+1}}\in T(B_{i+1}),
\end{align*}
where
\begin{align*}
	\tau^{(i+1)}_{l} = (\tau_1^{(i)})^{\otimes \theta_{i,i;1,l}}  \otimes \cdots \otimes (\tau_{j_{i}}^{(i)})^{\otimes \theta_{i,i;j_{i},l}} \in T(C_{i+1,l}). 
\end{align*}
It then follows from Equations \eqref{E:traceImageParts}, \eqref{E:traceImageTraces}, and \eqref{E:coefficientImage} that $\phi_{i,i}^{*}(\tau_{i+1}) = \tau_i$.

	We have thus constructed a trace $\tau = (\tau_i;\alpha_k^{(i)}, \tau_k^{(i)}) \in T(B)$ whose associated sequence of scalars is $((\alpha^{(i)}_1,\dotsc,\alpha^{(i)}_{j_i}))_{i\in \mathbb N}$. This proves the first statement. For the second statement, observe that $\tau|_{A} = \nu$. 
\end{proof}

	Recall that $\tau \in T(B)$ has a base of neighborhoods consisting of sets of the form $\{\tau': |\tau(b)-\tau'(b)|<\epsilon,\ b \in \mathcal F\}$, where $\epsilon >0$ and $ \mathcal F \subset B$ is finite; we shall denote such a neighborhood of $\tau$ by $\mathcal N(\epsilon,\mathcal F)$. We will use the following simple corollary of the Krein-Milman theorem, which we state without proof, in our main result. 

\begin{lem}\label{L:kM}
	Let $K$ be a compact convex subset of $T(B)$, let $\tau \in K$, and let $\mathcal N\subseteq K$ be a basic neighborhood of $\tau$. Then there is a number $N \in \mathbb N$ such that for all $n > N$, there exist points $\tau_1,\dotsc,\tau_n \in \partial K$ such that $n^{-1}(\tau_1+\cdots+\tau_n) \in \mathcal N$. 
\end{lem}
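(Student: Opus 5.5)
The plan is to derive the statement from the Krein--Milman theorem together with a rational approximation of convex coefficients, using the Hausdorff locally convex weak$^*$ topology on the state space of $B$. Write $\mathcal N=\mathcal N(\epsilon,\mathcal F)\cap K$ with $\mathcal F=\{b_1,\dots,b_r\}$ finite. By Krein--Milman, $K=\overline{\mathrm{conv}}(\partial K)$, so $\tau$ is a weak$^*$ limit of finite convex combinations of extreme points. Hence there are $\sigma_1,\dots,\sigma_p\in\partial K$ and weights $t_1,\dots,t_p\ge 0$ with $\sum t_q=1$ such that
\[
\Big|\sum_{q=1}^p t_q\sigma_q(b)-\tau(b)\Big|<\epsilon/2,\qquad b\in\mathcal F.
\]

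The next step replaces the real weights by rationals with denominator exactly $n$, for every large $n$. Let $M=\max_{b\in\mathcal F}\|b\|$; since tracial states have norm one, $|\sigma_q(b)|\le M$. For $n\in\mathbb N$, choose nonnegative integers $k_1,\dots,k_p$ with $\sum k_q=n$ and $|k_q/n-t_q|\le 1/n$ for each $q$. One way is to take $k_q=\lfloor nt_q\rfloor$ for $q<p$ and let $k_p$ absorb the remainder. The remainder $n-\sum_{q<p}\lfloor nt_q\rfloor$ lies in $[nt_p, nt_p+p-1]$, so the error for $q=p$ is at most $(p-1)/n$. Either way, every $q$ satisfies $|k_q/n-t_q|\le p/n$.

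Define $\tau_1,\dots,\tau_n$ by listing $\sigma_q$ exactly $k_q$ times; these are all in $\partial K$. Then
\[
\Big|\frac1n\sum_{s=1}^n\tau_s(b)-\sum_q t_q\sigma_q(b)\Big|\le\sum_q\Big|\frac{k_q}{n}-t_q\Big|M\le \frac{p^2M}{n}.
\]
Take $N$ with $p^2M/N<\epsilon/2$. For every $n>N$, the average then lies within $\epsilon$ of $\tau$ on $\mathcal F$. It also lies in $K$ by convexity, so it belongs to $\mathcal N$.

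The only real subtlety is the first step: a basic neighborhood of the closed convex hull must meet the convex hull itself. This holds because the neighborhood is open in $K$ relative to the weak$^*$ topology and $\tau$ lies in the closure of $\mathrm{conv}(\partial K)$. It also needs $K$ to be compact in a locally convex space, which is exactly the hypothesis. Nothing about the structure of $B$ is used.
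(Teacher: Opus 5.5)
Your argument is correct. The paper states this lemma without proof, calling it a simple corollary of the Krein--Milman theorem, and your route (Krein--Milman to obtain a finite convex combination of extreme points in the $\epsilon/2$-neighborhood, then rounding the weights to multiples of $1/n$ for every large $n$) is exactly the omitted routine verification. Your side remark that every rounding error can be made at most $1/n$ is true but unproved; it is also unneeded, since you only use the $p/n$ bound you did justify, and your observation that nothing about $B$ is used is what allows the paper to apply the lemma to $T(B_{i',k})$ later.
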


	Let $\nu = (\nu_i;\alpha^{(i)}_k,\text{Tr}^{(i)}_k) \in T(A)$. Denote by $F_{\nu}$ the \textit{fiber over $\nu$}; that is,
\begin{align*}
	F_{\nu} = \{\tau \in T(B) : \tau|_A = \nu\}.
\end{align*}
Notice that $\tau = (\tau_i)_{i\in \mathbb N} \in F_{\nu}$ if and only if its associated sequence of tuples of scalars is $((\alpha^{(i)}_1,\dotsc,\alpha^{(i)}_{j_i}))_{i\in \mathbb N}$ if and only if there exists an $i \in \mathbb N$ such that $\tau_i = \alpha^{(i)}_1\tau^{(i)}_1+\cdots + \alpha^{(i)}_{j_i}\tau^{(i)}_{j_i}$ for some $\tau^{(i)}_k \in T(C_{i,k})$, $1 \leq k \leq j_i$. Also, by Lemma \ref{L:traceExtends}, $F_{\nu}$ is nonempty when $T(C_0)$ is nonempty. 

\begin{lem}\label{L:fiberIsCompact}
	Let $\nu = (\nu_i;\alpha^{(i)}_k,\text{Tr}^{(i)}_k) \in T(A)$. Then $F_{\nu}$ is a compact convex subset of $T(B)$. 
	
	Moreover, if $\nu \in  \partial T(A)$, then $F_{\nu}$ is a face of $T(B)$. 
\end{lem}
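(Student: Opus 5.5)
The plan is to realize $F_\nu$ as the preimage of the point $\nu$ under the restriction map $r\colon T(B)\to T(A)$, $r(\tau)=\tau|_A$. This map is affine. It is also continuous for the weak* topologies: if $\tau_\alpha\to\tau$ pointwise on $B$, then in particular $\tau_\alpha(a)\to\tau(a)$ for every $a\in A\subseteq B$.

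For the first statement, note that $\{\nu\}$ is closed in $T(A)$. Hence $F_\nu=r^{-1}(\{\nu\})$ is a closed subset of the compact space $T(B)$, and so it is compact. Convexity comes from affineness: if $\tau,\tau'\in F_\nu$ and $0\le t\le 1$, then
\[
(t\tau+(1-t)\tau')|_A=t\nu+(1-t)\nu=\nu .
\]
Alternatively, one could use the characterization noted just before the lemma, namely that membership in $F_\nu$ is determined by the sequence of scalars $((\alpha^{(i)}_k))$. That condition is preserved under convex combinations, because a convex combination of $\tau_i=\sum_k\alpha^{(i)}_k\tau^{(i)}_k$ and $\tau_i'=\sum_k\alpha^{(i)}_k\tau'^{(i)}_k$ is again of this form, with component traces that are convex combinations (provided $\alpha^{(i)}_k\neq0$; the terms with $\alpha^{(i)}_k=0$ can be ignored). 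The restriction-map argument is cleaner, and I would use it.

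For the face property, suppose $\nu\in\partial T(A)$. Let $\tau\in F_\nu$ and write $\tau=t\tau'+(1-t)\tau''$ with $\tau',\tau''\in T(B)$ and $0<t<1$. Restricting to $A$ gives
\[
\nu=t\,\tau'|_A+(1-t)\,\tau''|_A .
\]
Here $\tau'|_A$ and $\tau''|_A$ are tracial states on $A$. They are states because $A$ contains the unit of $B$: the connecting maps $\phi_i$ are unital, so $A$ is a unital subalgebra. They are tracial because $A\subseteq B$. Since $\nu$ is extreme, $\tau'|_A=\tau''|_A=\nu$, so $\tau',\tau''\in F_\nu$. Thus $F_\nu$ is a face of $T(B)$, and it is nonempty by Lemma \ref{L:traceExtends} whenever $T(C_0)\neq\emptyset$.

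I do not expect a real obstacle. The only points to check are that restriction sends $T(B)$ into $T(A)$, which needs the unit of $B$ to lie in $A$, and that the weak* topology on $T(B)$ is compact. Both hold because the inductive limit is unital.
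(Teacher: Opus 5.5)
Your proof is correct. The face argument is the same as the paper's: restrict the convex combination to $A$ and use extremality of $\nu$.

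For compactness and convexity you take a more abstract route. You note that restriction $r\colon T(B)\to T(A)$ is affine and weak$^*$ continuous, using that $A$ contains the unit of $B$. Hence $F_\nu=r^{-1}(\{\nu\})$ is a closed convex subset of the compact set $T(B)$.

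The paper works instead in the inductive-limit coordinates. For convexity, it checks by hand that a convex combination of $(\tau_i;\alpha^{(i)}_k,\tau^{(i)}_k)$ and $(\mu_i;\alpha^{(i)}_k,\mu^{(i)}_k)$ has the same scalar sequence. For closedness, it takes a net in $F_\nu$ converging to $\tau$ and tests it against the central projections $(0,\dots,0,1,0,\dots,0)\in B_i$, which forces $\lambda^{(i)}_k=\alpha^{(i)}_k$. It then appeals to the characterization that membership in $F_\nu$ is determined by the scalar sequence, which rests on uniqueness of the trace on $M_{n_{i,k}}(\mathbb C)$. Those central projections lie in $A$, so the paper's argument is really a coordinatized instance of your continuity argument.

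Your version is shorter. It needs neither the scalar characterization nor the paper's separate handling of the empty and singleton cases, and it works for any unital subalgebra $A\subseteq B$. The paper's version keeps the coordinates $\alpha^{(i)}_k$ and $\tau^{(i)}_k$ in view. It uses them right afterward to describe $F_\nu$ as the projective limit of the $F_i$, and in Lemma \ref{L:limitOfExtremeTraces}.
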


\begin{proof}
	If $F_\nu$ is empty, then so is $T(C_0)$ by Lemma \ref{L:traceExtends}, hence so is $T(B)$, and the results follow trivially. If $F_\nu$ is a singleton, the results are still trivial. So suppose $F_\nu$ contains at least two traces. 
	
	A short calculation shows that 
\begin{align*}
	(1-\lambda)\tau + \lambda \mu =\Big ( (1-\lambda)\tau_i + \lambda \mu_i; \alpha_k^{(i)}, (1-\lambda)\tau_k^{(i)} + \lambda \mu_k^{(i)} \Big )
\end{align*}
for any $\tau = (\tau_i;\alpha^{(i)}_k,\tau^{(i)}_k), \mu = (\mu_i;\alpha^{(i)}_k,\mu^{(i)}_k) \in F_\nu$ and $\lambda \in [0,1]$. That is, $F_\nu$ is convex. 

	Furthermore, $F_{\nu}$ is closed, hence compact. For this, let $(\tau_{\beta})_{\beta \in \Lambda}$ be a net in $F_\nu$ converging to $\tau = (\tau_i;\lambda^{(i)}_k,\tau^{(i)}_k) \in T(B)$, where $\tau_\beta = (\tau_{\beta,i};\alpha^{(i)}_k,\tau^{(i)}_{\beta,k})$. Then for any finite subset $\mathcal F \subset B$ and any $\epsilon > 0$, there exists $\gamma \in \Lambda$ such that 
\begin{align}\label{E:neighborhood}
	|\tau(f)-\tau_\beta(f)| < \epsilon,\quad \forall f \in \mathcal F,\ \beta \geq \lambda.
\end{align}
Taking $f = (0,\dotsc,0,1,0,\dotsc,0) \in B_i$, we see that 
\begin{align}\label{E:evaluationAtIdentity}
	|\tau_i(f) - \tau_{\beta,i}(f)| = |\lambda_k^{(i)} - \alpha_k^{(i)}|.
\end{align}
Since $\epsilon$ can be chosen to be arbitrarily small, it follows from Equations \eqref{E:neighborhood} and \eqref{E:evaluationAtIdentity} that $\alpha^{(i)}_k = \lambda^{(i)}_k$ for each $i \in \mathbb N$, $1 \leq k \leq j_i$; hence $\tau \in F_\nu$. 

	For the second statement, let $\tau,\mu \in T(B)$ and suppose that $(1-\lambda)\tau + \lambda \mu \in F_\nu$ for some $0<\lambda <1$; then 
\begin{align*}
	\nu = \big((1-\lambda)\tau + \lambda \mu \big )|_A = (1-\lambda)\tau|_A + \lambda \mu |_A
\end{align*}
so that, by the hypothesis of the statement, $\tau|_A = \mu|_A = \nu$. That is, $\tau,\mu \in F_{\nu}$. 
\end{proof}

Letting $F$ be a face of $T(B)$, recall that $F$ is in particular a simplex. Moreover, notice that $F$ is obtained from the limit of the affine projective system
\begin{align*}
	F_1\xleftarrow{\phi_1^*|_{F_2}} F_2 \xleftarrow{\phi_2^*|_{F_3}}F_3 \xleftarrow{\phi_3^*|_{F_4}} \cdots,
\end{align*}
where $F_i$ is a face of $T(B_i)$. Hence, when $\nu = (\nu_i;\alpha^{(i)}_k,\text{Tr}^{(i)}_k) \in \partial T(A)$, by Lemma \ref{L:fiberIsCompact}, we have that $F_\nu$ is obtained from the limit of the projective sequence $(F_i, \phi_{i}^*|_{F_{i+1}})_{i\in \mathbb N}$, where 
\begin{align*}
	F_i = \{\alpha^{(i)}_1\tau^{(i)}_1+\cdots + \alpha^{(i)}_{j_i}\tau^{(i)}_{j_i} \mid \tau^{(i)}_k \in T(C_{i,k}),\, 1\leq k \leq j_i\}.
\end{align*}

	The last lemma of this section characterizes some of the extreme points of $F_{\nu}$. 

\begin{lem}\label{L:limitOfExtremeTraces}
	Let $\nu = (\nu_i;\alpha^{(i)}_k,\text{Tr}^{(i)}_k) \in T(A)$ and $\tau = (\tau_i;\alpha_k^{(i)},\tau_k^{(i)}) \in F_\nu$. If for $i$ sufficiently large, $\tau^{(i)}_{k} \in \partial T(C_{i,k})$ for each $1 \leq k \leq j_i$, then $\tau \in \partial F_\nu$. 
\end{lem}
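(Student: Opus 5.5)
We argue directly from the definition of an extreme point, working level by level in the projective system $(F_i,\phi_i^*|_{F_{i+1}})$. Suppose $\tau = (1-\lambda)\mu + \lambda\rho$ with $\mu,\rho \in F_\nu$ and $0<\lambda<1$. We must show $\mu=\rho=\tau$. A trace on $B$ is determined by its sequence of restrictions $(\tau_i)$, and if two traces agree at all sufficiently large levels they agree at all levels, since $\tau_i = \phi_{i,i+t-1}^*(\tau_{i+t})$. So it suffices to show $\mu_i = \rho_i = \tau_i$ for all $i$ beyond the threshold $i_0$ given in the hypothesis.

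Fix $i \geq i_0$. Since $\mu,\rho \in F_\nu$, both have the same scalar sequence $(\alpha_k^{(i)})$ as $\tau$. Write $\mu_i = \sum_k \alpha_k^{(i)}\mu_k^{(i)}$ and $\rho_i = \sum_k \alpha_k^{(i)}\rho_k^{(i)}$. By the convex-combination formula in the proof of Lemma \ref{L:fiberIsCompact},
\begin{align*}
	\tau_k^{(i)} = (1-\lambda)\mu_k^{(i)} + \lambda\rho_k^{(i)} \quad \text{whenever } \alpha_k^{(i)}\neq 0,
\end{align*}
where the identity is obtained by evaluating on elements supported in the $k$-th summand $B_{i,k}$ and dividing by $\alpha_k^{(i)}$. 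By hypothesis $\tau_k^{(i)} \in \partial T(C_{i,k})$, so $\mu_k^{(i)} = \rho_k^{(i)} = \tau_k^{(i)}$ for every such $k$.

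For $k$ with $\alpha_k^{(i)}=0$, the components $\mu_k^{(i)},\rho_k^{(i)}$ are not determined, but they contribute nothing to the traces. Hence $\mu_i=\rho_i=\tau_i$ as tracial states on $B_i$. Since this holds for all $i\ge i_0$, the reduction in the first paragraph gives $\mu=\rho=\tau$, so $\tau \in \partial F_\nu$.

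The only step needing care is the bookkeeping for zero coefficients: the "component traces" $\tau_k^{(i)}$ are only meaningful when $\alpha_k^{(i)}\neq0$. The hypothesis should therefore be read as concerning those $k$, with components for $\alpha_k^{(i)}=0$ chosen arbitrarily, and the argument never uses them. One should also check that the identification of $T(B_{i,k})$ with $T(C_{i,k})$ preserves extremality. This holds because $\tau\mapsto \mathrm{Tr}\otimes\tau$ is an affine homeomorphism $T(C_{i,k})\to T(M_{n_{i,k}}(C_{i,k}))$.
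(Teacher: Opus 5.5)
Your proof is correct and follows the paper's argument: split $\tau_i=(1-\lambda)\mu_i+\lambda\rho_i$ summand by summand, use extremality of $\tau_k^{(i)}$ in $T(C_{i,k})$ for large $i$, and conclude $\mu=\rho=\tau$. Your explicit handling of the case $\alpha_k^{(i)}=0$, and your step from agreement at all large levels to agreement at every level, fill in points the paper leaves implicit without changing the route.
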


\begin{proof}
	Suppose that the hypothesis of the lemma holds and, at the same time, that $\tau = (1-\lambda) \mu + \lambda \eta$ for $\mu =  (\mu_i;\alpha_k^{(i)},\mu_k^{(i)}),\eta =  (\eta_i;\alpha_k^{(i)},\eta_k^{(i)}) \in F_\nu$ and $\lambda \in (0,1)$. Then for every $i \in \mathbb N$, $\tau_i = (1-\lambda)\mu_i + \lambda \eta_i$ so that $\alpha_k^{(i)} \tau_k^{(i)} = (1-\lambda)\alpha_k^{(i)} \mu_k^{(i)} + \lambda \alpha_k^{(i)} \eta_k^{(i)}$ for each $1\leq k \leq j_i$. Hence, for sufficiently large $i$, we have $\tau_k^{(i)} = \mu_k^{(i)} = \eta_k^{(i)}$ for each $1 \leq k \leq j_i$ since $\tau_k^{(i)}$ is extreme. Thus $\tau = \mu = \eta$, and so $\tau \in \partial F_{\nu}$.
\end{proof}

\section{Main result}
	
\begin{thm}\label{T:main}
	Let $(G_i,\theta_i)_{i\in \mathbb N}$ be an inductive sequence of ordered abelian groups with distinguished order units, where $G_i = (\mathbb Z^{j_i},\mathbb Z^{j_i}_{+},(n_{i,1},\dotsc,n_{i,j_i}))$, and let $C_0$ be a noncommutative nuclear unital C*-algebra with more than one trace. If the canonical AF subalgebra $A$ of the noncommutative Villadsen algebra $B((G_i,\theta_i),C_0)$ is simple, then for any $\nu \in \partial T(A)$, the fiber over $\nu$ is the Poulsen simplex.
\end{thm}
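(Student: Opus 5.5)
The plan is to show directly that $\partial F_\nu$ is dense in $F_\nu$, and that $F_\nu$ is a nontrivial metrizable simplex (implicitly taking $C_0$ separable, as Poulsen simplices are metrizable). By Lemma \ref{L:fiberIsCompact}, $F_\nu$ is a closed face of $T(B)$, hence a simplex. It has more than one point: take two distinct traces of $C_0$ and run the construction of Lemma \ref{L:traceExtends} starting from each. It then remains to approximate an arbitrary $\tau=(\tau_i;\alpha^{(i)}_k,\tau^{(i)}_k)\in F_\nu$ by extreme points of $F_\nu$.

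\emph{Reduction to finite stage.} Fix a basic neighborhood $\mathcal N(\epsilon,\mathcal F)$ of $\tau$. Since $\bigcup_i \phi_{i,\infty}(B_i)$ is dense, we may assume $\mathcal F\subset B_i$ for some $i$. Matrix units in $M_{n_{i,k}}(\mathbb C)$ reduce everything to elements of $C_{i,k}$. It therefore suffices to find $\sigma\in\partial F_\nu$ with $\sigma^{(i)}_k$ weak$^*$-close to $\tau^{(i)}_k$ on a finite set in $C_{i,k}$, for each $k$. Simplicity of $A$ gives $\alpha^{(i)}_k>0$, since $\nu$ is faithful. By Lemma \ref{L:kM} applied to $T(C_{i,k})$ (the same Krein--Milman argument works there), for every large $N$ there are $\rho_{k,1},\dots,\rho_{k,N}\in\partial T(C_{i,k})$ whose average is within $\epsilon/2$ of $\tau^{(i)}_k$ on the finite set.

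\emph{Realizing the average at a later stage.} Simplicity of $A$ and infinite-dimensionality imply that, for $t$ large, all $\theta_{i,i+t-1;k,l}>0$ and $n_{i,k}/n_{i+t,l}\to 0$. Hence every individual weight
\[
w_{k,l}=\frac{\alpha^{(i+t)}_l n_{i,k}}{\alpha^{(i)}_k n_{i+t,l}}
\]
appearing in Equation \eqref{E:traceImageTraces} tends to $0$. By Equation \eqref{E:coefficientImage}, these weights (with multiplicities $\theta_{i,i+t-1;k,l}$) sum to $1$. Greedily assign to each slot $(k,l,m)$ one of $\rho_{k,1},\dots,\rho_{k,N}$ so that each $\rho_{k,r}$ receives total weight within $\max_l w_{k,l}$ of $1/N$. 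Then define $\sigma^{(i+t)}_l\in T(C_{i+t,l})$ as the tensor product, in the canonical order of Equation \eqref{E:composed}, of the assigned $\rho$'s. The $\mu_{k,l,m}$ of Equation \eqref{E:traceImageParts} are then exactly the assigned $\rho$'s. So the stage-$i$ image is $\sum_r(\text{weight}_r)\rho_{k,r}$, which lies within $N\max_l w_{k,l}\cdot\sup\|c\|$ of the average, and this is less than $\epsilon/2$ for $t$ large. Finally, set $\sigma_j$ for $j\le i+t$ by pulling back, and for $j>i+t$ extend by the tensor-product recursion of Lemma \ref{L:traceExtends} with coefficients $\alpha^{(j)}_k$. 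This gives $\sigma\in F_\nu$.

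\emph{Extremality.} By Lemma \ref{L:limitOfExtremeTraces}, it suffices that every $\sigma^{(j)}_l$, $j\ge i+t$, is extreme in $T(C_{j,l})$. Each is a finite tensor product of extreme traces on nuclear C*-algebras. An extreme trace gives a finite factor in its GNS representation, and the spatial tensor product of finite factors is a factor, so the product trace is again extreme. This fact (where nuclearity of $C_0$ guarantees the tensor product is unambiguous) and the weight-balancing estimate are the main points to check. I expect the weight-balancing, specifically verifying from simplicity that $\max_l w_{k,l}\to0$ uniformly in $l$, to be the main technical obstacle. Combining the three steps, extreme points are dense and $F_\nu$ is the Poulsen simplex.
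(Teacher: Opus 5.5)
Your proposal is correct and follows the paper's proof essentially step for step: reduce to a finite stage, take Krein--Milman averages of extreme traces on the building blocks, and place them as tensor factors at a later stage so that the pulled-back trace approximates $\tau$. It then extends by tensor products and gets extremality from the fact that tensor products of extreme traces are extreme, together with Lemma \ref{L:limitOfExtremeTraces}. The only difference is the placement step: the paper uses the ``for all $n>N$'' form of Lemma \ref{L:kM} and simplicity ($\theta_{i',i'+t'-1;k,l}>N_k$ for all $k,l$) to choose, for each $l$ separately, $\theta_{i',i'+t'-1;k,l}$ extreme traces whose plain average is already $\epsilon$-close, so the stage-$i'$ estimate is just a convex combination over $l$ and needs neither your weight-balancing nor the bound $\max_l w_{k,l}\to 0$, though your variant is equally valid.
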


\begin{proof}
	Since the fiber over $\nu$, $F_\nu$, is necessarily a simplex by Lemma \ref{L:fiberIsCompact}, it is sufficient to show that $\overline{\partial F_{\nu}} = F_{\nu}$. 
	
	Let $B= B((G_i,\theta_i),C_0)$, and suppose that $B$ is the limit of the inductive sequence $(B_i,\phi_i)_{i\in \mathbb N}$, with $B_{i,k} =M_{n_{i,k}}(C_0^{\otimes n_{i,k}})$ and $B_i = \bigoplus_{k=1}^{j_i} B_{i,k}$ (recall the form of $\phi_i$ from Equation \eqref{E:connectingMap}). Denote the multiplicity matrix for the composed map $\theta_{i,i+t-1}\colon G_i \to G_{i+t}$ by $[\theta_{i,i+t-1;k,l}]$, $1\leq k \leq j_i$, $1 \leq l \leq j_{i+t}$, $i,t \in \mathbb N$.
	
	Let $\nu = (\nu_i;\alpha^{(i)}_k,\text{Tr}^{(i)}_k)$ and $\tau = (\tau_i;\lambda_k^{(i)},\tau_k^{(i)}) \in F_{\nu}$, and let $\mathcal N = \mathcal N(\epsilon,\mathcal F) \subseteq F_\nu$ be a basic neighborhood of $\tau$.  We will show that $\mathcal N$ contains an extreme trace. 
	
	
	Without loss of generality, assume $\mathcal F \subset B_{i'}$ for some $i' \in \mathbb N$. For each $1\leq k \leq j_{i'}$, let $\mathcal F_{k}$ denote the subset of $B_{i',k}$ consisting of the $k$th component of each $b \in \mathcal F$. Consider the basic neighborhood $\mathcal N_k = \mathcal N_k(\epsilon,\mathcal F_k) \subseteq T(B_{i',k})$ of $\tau_k^{(i')}$. By Lemma \ref{L:kM}, there exists a number $N_k$ such that when $n > N_k$, there are $n$ points in $\partial T(B_{i',k})$ whose average is contained in $\mathcal N_k$. In fact, since $A$ is simple, there exists a $t' \in \mathbb N$ such that for each $1 \leq l \leq j_{i'+t'}$,
\begin{align*}
	\theta_{i',i'+t'-1;k,l} > N_k
\end{align*}
for each $1 \leq k \leq j_{i'}$. Thus, in particular, for each $1 \leq l \leq j_{i'+t'}$ there exist $\mu_{k,l,1},\dotsc,\mu_{\theta_{i',i'+t'-1;k,l}} \in \partial T(B_{i',k})$ such that for any $b \in\mathcal F_k$,
\begin{align}\label{E:inequality}
	\bigg | \frac{1}{\theta_{i',i'+t'-1;k,l}}\sum_{m=1}^{\theta_{i',i'+t'-1;k,l}} \mu_{k,l,m}(b) - \tau_k^{(i')}(b) \bigg | < \epsilon,
\end{align}
for each $1 \leq k \leq j_{i'}$. 

	Consider a trace $\eta = (\eta_i;\alpha_{k}^{(i)},\eta_{k}^{(i)}) \in F_\nu$ such that 
\begin{align*}
	\eta^{(i'+t')}_l:=  \bigotimes_{m=1}^{\theta_{i',i'+t'-1;1,l}} \mu_{1,l,m} \otimes \cdots \otimes \bigotimes_{m=1}^{\theta_{i',i'+t'-1;j_{i'},l}} \mu_{j_{i'},l,m}, \quad 1 \leq l \leq j_{i'+t'}.
\end{align*}
Then, for any $t \in \mathbb N$, a brief calculation reveals that the expression for $\eta_l^{(i'+t'+t)}$ satisfying the requirement that $\phi_{i'+t',i'+t'+t-1}^*(\eta_{i'+t'+t}) = \eta_{i'+t'}$ is
\begin{align*}
	\eta^{(i'+t'+t)}_l=  \bigotimes_{k=1}^{j_{i'+t'}} \big (\eta_{k}^{(i'+t')}\big )^{\otimes \theta_{i'+t',i'+t'+t-1;k,l}},\quad 1 \leq l \leq j_{i'+t'+t}
\end{align*}
(see Equations \eqref{E:traceImageParts}, \eqref{E:traceImageTraces}, and \eqref{E:coefficientImage}). 

	Because $C_0$ is nuclear, the tensor product of extreme traces is extreme (for this, see for example \cite[Proposition 11.3.2]{rvKjrR97}); thus $\eta_{l}^{(i'+t'+t)} \in \partial T(B_{i'+t'+t,l})$ for each $1 \leq l \leq j_{i'+t'+t}$, for every $t \in \mathbb N$. It then follows from Lemma \ref{L:limitOfExtremeTraces} that $\eta \in \partial F_\nu$. It is now sufficient to show that $\eta \in \mathcal N$. 

	In fact, another brief calculation (using Equations \eqref{E:traceImageParts}, \eqref{E:traceImageTraces}, and \eqref{E:coefficientImage}) shows that the expression for $\eta_k^{(i')}$ satisfying the equation $\phi_{i',i'+t'-1}^*(\eta_{i'+t'}) = \eta_{i'}$ is
\begin{align*}
	\eta_{k}^{(i')} = \frac{1}{\alpha_k^{(i)}}\sum_{l=1}^{j_{i'+t'}}   \sum_{m=1}^{\theta_{i',i'+t'-1;k,l}} \frac{\alpha_l^{(i'+t')} n_{i',k}}{n_{i'+t',l}} \mu_{k,l,m} ,\quad 1 \leq k \leq j_{i'}.
\end{align*}
It follows that for every $b = (b_1,\dotsc,b_{j_{i'}}) \in \mathcal F$,
\begin{align*}
	|\tau(b) - \eta(b)| &= |\tau_{i'}(b)-\eta_{i'}(b)|\\
	&=\Bigg |\sum_{k=1}^{j_{i'}}\alpha_{k}^{(i')}\big ( \tau_k^{(i')}(b_k) - \eta_k^{(i')}(b_k) \big ) \Bigg | \\
	&= \Bigg | \sum_{k=1}^{j_{i'}} \sum_{l=1}^{j_{i'+t'}} \frac{\alpha_{l}^{(i'+t')}n_{i',k}\theta_{i',i'+t'-1;k,l}}{n_{i'+t',l}} \Big ( \tau_{k}^{(i')}(b_k) - \frac{1}{\theta_{i',i'+t'-1;k,l}} \sum_{m=1}^{\theta_{i',i'+t'-1;k,l}}\mu_{k,l,m}(b_{k}) \Big ) \Bigg | \\
	&\leq \sum_{k=1}^{j_{i'}} \sum_{l=1}^{j_{i'+t'}} \frac{\alpha_{l}^{(i'+t')}n_{i',k}\theta_{i',i'+t'-1;k,l}}{n_{i'+t',l}} \Big | \tau_{k}^{(i')}(b_k) - \frac{1}{\theta_{i',i'+t'-1;k,l}} \sum_{m=1}^{\theta_{i',i'+t'-1;k,l}}\mu_{k,l,m}(b_{k}) \Big |  \\
	&< \sum_{k=1}^{j_{i'}} \sum_{l=1}^{j_{i'+t'}} \frac{\alpha_{l}^{(i'+t')}n_{i',k}\theta_{i',i'+t'-1;k,l}}{n_{i'+t',l}} \epsilon\\
	&= \epsilon,
\end{align*}
where the last inequality is a result of Equation \eqref{E:inequality} and the last equality is a result of the fact that $n_{i'+t',l} = \sum_{1\leq k \leq j_{i'}} n_{i',k}\theta_{i',i'+t'-1;k,l}$. Thus, $\eta \in \mathcal N$.
\end{proof}

	As an obvious corollary, we have:
\begin{cor}\label{C:main}
	Let $(G_i,\theta_i)_{i\in \mathbb N}$ and $C_0$ be as in the statement of Theorem \ref{T:main}. If the canonical AF subalgebra of the noncommutative Villadsen algebra $B((G_i,\theta_i),C_0)$ is simple and has a unique trace, then $T(B)$ is the Poulsen simplex.
\end{cor}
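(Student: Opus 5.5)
The plan is to reduce Corollary \ref{C:main} directly to Theorem \ref{T:main} by showing that, under the uniqueness hypothesis, the fiber over the unique trace of $A$ is all of $T(B)$.

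First, I will note that $T(A)$ is nonempty, since $A$ is a unital AF algebra. Uniqueness then gives $T(A)=\{\nu\}$ for a single trace $\nu$, and a singleton's only point is extreme, so $\nu\in\partial T(A)$.

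Next, take any $\tau\in T(B)$. Its restriction $\tau|_A$ is a tracial state on $A$, because $A$ is a unital subalgebra of $B$ (the connecting maps $\phi_i'$ are unital restrictions of the unital maps $\phi_i$). Hence $\tau|_A=\nu$, i.e. $\tau\in F_\nu$. The reverse inclusion $F_\nu\subseteq T(B)$ holds by definition, so $T(B)=F_\nu$.

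Finally, I will apply Theorem \ref{T:main}, whose hypotheses are exactly those assumed here: $C_0$ is noncommutative, nuclear and unital with more than one trace, and $A$ is simple. It shows that $F_\nu$ is the Poulsen simplex, hence so is $T(B)$.

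No step is a real obstacle. The only points needing care are that $\nu$ is extreme, which is automatic for a singleton, and that restriction to $A$ preserves being a state, which holds because the units agree. Lemma \ref{L:traceExtends} is not needed for the equality $T(B)=F_\nu$, though it confirms that $T(B)$ is nonempty, since $T(C_0)\neq\emptyset$.
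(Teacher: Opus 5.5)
Your proposal is correct and is exactly the argument the paper intends when it calls this an obvious corollary of Theorem \ref{T:main}. Uniqueness makes the single trace $\nu$ of $A$ extreme, and $F_\nu=T(B)$ because $A$ contains the unit of $B$, so the theorem applies directly.
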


	Note that in the statement of Theorem \ref{T:main}, we must specify $C_0$ to have more than a single trace or else the result does not hold. If $C_0$ has a unique trace, then so too does $M_{n_{i,k}}(C_0^{\otimes n_{i,k}})$, which implies $F_\nu$ is a singleton (because there is only one possible sequence of tuples of scalars for any member of $F_\nu$). But (by convention) a point is not the Poulsen simplex. 

	In fact, in a special case, Theorem \ref{T:main} applies to the AF-Villadsen algebras of \cite{gaEzN25-Vill}. Let $(G_i,\theta_i)_{i\in \mathbb N}$ be as in the statement of Theorem \ref{T:main}, and let $C_0 = C(X)$ for a compact metrizable seed space $X$ which is not a single point. Consider the Villadsen algebra $B =B((G_{i},\theta_i),C_0)$ obtained as the limit of the inductive sequence $(B_i,\phi_i)_{i \in \mathbb N}$, where 
\begin{align*}
	B_i = \bigoplus_{k=1}^{j_i} B_{i,k}, \quad B_{i,k} = M_{n_{i,k}}\big (C_0^{\otimes n_{i,k}}\big ) = M_{n_{i,k}}\big ( C(X^{n_{i,k}}) \big )
\end{align*}
and where the multiplicity matrix for the composed map $\theta_{i,i+t-1}$ is given by $[\theta_{i,i+t-1;k,l}]$, $1\leq k \leq j_i$, $1 \leq l \leq j_{i+t}$, $t \in \mathbb N$. Since $C_0$ is commutative, the seed for $\phi_i$ has the form
\begin{multline}\label{E:connectingMapNew}
	\bigoplus_{k=1}^{j_i}C(X^{n_{i,k}}) \ni \bigoplus_{k=1}^{j_i} f_k \mapsto \\ \bigoplus_{l=1}^{j_{i+1}} \text{diag} \big ( f_1\circ \pi_{1,l,1},\dotsc,f_1\circ \pi_{1,l,\theta_{i,i;1,l}},\dotsc,f_{j_i} \circ \pi_{j_i,l,1},\dotsc,f_{j_i} \circ \pi_{j_i,l,\theta_{i,i;j_i,l}} \big )   \\ \in \bigoplus_{l=1}^{j_{i+1}} M_{\theta_{i,i;1,l}+\theta_{i,i;2,l}+\cdots+\theta_{i,i;j_i,l}}\big (C(X^{n_{i+1,l}})\big ),
\end{multline}
where $\pi_{k,l,m}$ is the projection of
\begin{align*}
	X^{n_{i+1,l}} = \underbrace{X^{n_{i,1}}\times \cdots \times X^{n_{i,1}}}_{\theta_{i,i;1,l}}\times \cdots \times \underbrace{X^{n_{i,j_i}}\times \cdots \times X^{n_{i,j_i}}}_{\theta_{i,i;j_i,l}}
\end{align*}
onto the $m$-th factor of $X^{n_{i,k}}$, for $1\leq m \leq \theta_{i,i;k,l}$, $1\leq k \leq j_i$, and $1 \leq l \leq j_{i+1}$.

	Now for each $i \in \mathbb N$, let $E_{i;k,l} \subseteq X^{n_{i,k}}$ be a (nonempty) finite point evaluation set, $1 \leq k \leq j_i$, $1 \leq l \leq j_{i+1}$, and let
\begin{align*}
	D_i = \bigoplus_{k=1}^{j_i} D_{i,k},\quad D_{i,k} =  M_{\Tilde{n}_{i,k}}\big (C(X^{n_{i,k}})\big ),
\end{align*}
where $\Tilde{n}_{1,l}=n_{1,l}$ and
\begin{align*}
	\Tilde{n}_{i+1,l} = \sum_{k=1}^{j_{i}}\big (\theta_{i,i;k,l}+|E_{i;k,l}| \big )\Tilde{n}_{i,k}.
\end{align*}
Fixing $i \in \mathbb N$, define the seed of an injective unital *-homomorphism $\psi_{i} \colon D_i \to D_{i+1}$ (up to unitary equivalence) by
\begin{multline*}
	\bigoplus_{k=1}^{j_i}C(X^{n_{i,k}}) \ni \bigoplus_{k=1}^{j_i} f_k \mapsto \\ \bigoplus_{l=1}^{j_{i+1}} \text{diag} \big ( f_1\circ \pi_{1,l,1},\dotsc,f_1\circ \pi_{1,l,\theta_{i,i;1,l}},f_1(E_{i;1,l}),\dotsc,f_{j_i} \circ \pi_{j_i,l,1},\dotsc,f_{j_i} \circ \pi_{j_i,l,\theta_{i,i;j_i,l}}, f_{j_i}(E_{i;j_i,l}) \big )   \\ \in \bigoplus_{l=1}^{j_{i+1}} M_{\theta_{i,i;1,l}+|E_{i;1,l}|+\theta_{i,i;2,l}+|E_{i;2,l}|+\cdots+\theta_{i,i;j_i,l}+|E_{i;j_i,l}|}\big (C(X^{n_{i+1,l}})\big ),
\end{multline*}
where $\pi_{k,l,m}$ is defined as in Equation \eqref{E:connectingMapNew}. Then define $D$ to be the limit of the inductive sequence $(D_i,\psi_i)_{i\in \mathbb N}$. 

	Denote the canonical AF subalgebra of $B$ by $A$; assume it is simple. For each $i \in \mathbb N$, letting $S(\mathbb Z^{j_i})$ denote the state space of $\mathbb Z^{j_i}$ normalized with respect to the order unit $(n_{i,1},\dotsc,n_{i,j_i})$, recall that $\textit{Aff}(S(K_0(A)))$ is given by the limit of the inductive sequence $(\textit{Aff}(S(\mathbb Z^{j_i})), \theta_i^{**})_{i\in \mathbb N}$, where 
\begin{align*}
	\theta_{i,i+t-1}^{**} = [\theta_{i,i+t-1;k,l}n_{i,k}/n_{i+t,l}]_{k,l}
\end{align*} 
and we make the canonical identification $\textit{Aff}(S(\mathbb Z^{j_i})) \cong \mathbb R^{j_i}$. Also, consider the map
\begin{align*}
	\Theta_{i}\colon \textit{Aff}(S(\mathbb Z^{j_i})) \to \textit{Aff}(S(\mathbb Z^{j_{i+1}})),\quad \Theta_i = [\Theta_{i;k,l}] = \big [ n_{i,k} (\theta_{i;k,l}+|E_{i;k,l}|)/n_{i+1,l} \big ]_{k,l}.
\end{align*}
It will be useful in the sequel to note that 
\begin{align*}
	\Theta_{i,i+t-1}-\theta_{i,i+t-1}^{**} = \big [|E_{i,i+t-1;k,l}|n_{i,k}/n_{i+t,l}\big ]_{k,l}
\end{align*}
is a positive map. 

	Then, for each $i \in \mathbb N$, identifying each element of $\textit{Aff}(S(\mathbb Z^{j_i}))$ with the appropriate element of $\textit{Aff}(S(K_0(A)))$, consider the sequence $(r_i)_{i\in \mathbb N}\subseteq \textit{Aff}(S(K_0(A)))$, where $r_1$ denotes the order unit of $\textit{Aff}(S(\mathbb Z^{j_1}))$ (i.e., the constant function equal to one, or equivalently, the vector of all ones in $\mathbb R^{j_1}$) and 
\begin{align*}
	r_i = \Theta_{1,i-1}(r_1) = \Theta_{i-1}(r_{i-1}), \quad i >1. 
\end{align*}
Then a short calculation shows that
\begin{align*}
	r_{i} = \Big (\frac{\Tilde{n}_{i,1}}{n_{i,1}},\dotsc,\frac{\Tilde{n}_{i,j_{i}}}{n_{i,j_{i}}}\Big ), \quad i \in \mathbb N,
\end{align*}
and the positivity of $\Theta_{i,i+t-1}-\theta_{i,i+t-1}^{**}$ implies
\begin{align*}
	r_{i+1}-r_i = (\Theta_{i}-\theta_{i}^{**})(r_i)  \geq 0
\end{align*}
so that $(r_i)$ is increasing. 

	We note in passing that the condition on the sequence $(r_i)_{i\in \mathbb N}$ specified in the following theorem is analogous to that which ensures the existence of the function $r_{\infty}^{(0)}$ from \cite{gaEzN25-Vill} as a nonzero member of $\textit{Aff}(S(K_0(A)))$. 

	
\begin{thm}\label{T:traceIntertwining}
	In the setting above, if the sequence $(r_i)_{i\in \mathbb N}$ converges uniformly (in the supremum norm) and the limit is a constant function, e.g., if $A$ has a unique trace, then $T(D) \cong T(B)$; if the (uniform) limit is not constant, then the tracial cones are isomorphic, i.e., $\mathbb R^+T(D) \cong \mathbb R^+T(B)$. 
\end{thm}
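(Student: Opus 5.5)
We will compare the projective systems $(T(D_i),\psi_i^*)$ and $(T(B_i),\phi_i^*)$ through their tracial cones. Since $D_{i,k}$ and $B_{i,k}$ are both matrix algebras over the same commutative algebra $C(X^{n_{i,k}})$, a positive tracial functional on either is $\lambda_k\tau_k$ with $\tau_k\in T(C(X^{n_{i,k}}))$ and $\lambda_k\ge 0$. We identify the cones $\mathbb R^+T(D_i)$ and $\mathbb R^+T(B_i)$ via the unnormalized traces, i.e.\ the traces $\lambda\tau\otimes\mathrm{Tr}$ without dividing by the matrix size. In these coordinates the dual of $\phi_i$ sends a cone element $(\omega_l)_l$ on $B_{i+1}$ to
\begin{align*}
\Big(\sum_l\sum_{m=1}^{\theta_{i;k,l}}\omega_l\circ\pi_{k,l,m}^*\Big)_k .
\end{align*}
The dual of $\psi_i$ sends it to the same expression plus the point-evaluation term $\sum_l\sum_{x\in E_{i;k,l}}\omega_l(1)\,\delta_x$. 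So $\psi_i^*=\phi_i^*+\epsilon_i^*$, where $\epsilon_i^*$ is positive and depends only on the total masses $\omega_l(1)$, i.e.\ on the image in the $K_0$/affine level.

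The plan is to build an affine isomorphism of cones $\Phi\colon\mathbb R^+T(D)\to\mathbb R^+T(B)$ by an approximate intertwining. Given $\omega=(\omega_i)\in\varprojlim\mathbb R^+T(D_i)$, define $\Phi(\omega)_i=\lim_{t\to\infty}\phi_{i,i+t-1}^*(\omega_{i+t})$. First, use the identity $\phi_{i,i+t-1}^*\omega_{i+t}=\psi_{i,i+t-1}^*\omega_{i+t}-(\text{positive correction})=\omega_i-(\text{correction})$. This shows the sequence is decreasing in the cone order, hence convergent in norm on each finite stage. By construction the limit is compatible with the $\phi_i^*$. Conversely, given $\rho\in\varprojlim\mathbb R^+T(B_i)$, define $\Psi(\rho)_i=\lim_t\psi_{i,i+t-1}^*(\rho_{i+t})$; this is increasing.

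The real content is convergence of $\Psi$, and positivity and nondegeneracy of $\Phi$. The total masses are governed exactly by the matrices $\Theta$ versus $\theta^{**}$. Evaluated at the unit, $\psi_{i,i+t-1}^*(\rho_{i+t})(1)$ equals the pairing of the $K_0$-state of $\rho$ with $\Theta_{i,i+t-1}(\mathbf 1)$ rescaled. After normalizing, this is $\langle\rho,\,r_{i+t}/r_i\rangle$-type data. Uniform convergence of $(r_i)$ to $r_\infty$ therefore gives uniform bounds on these masses. These bounds yield that $\Psi$ is well defined with finite values, and the defect masses satisfy $\omega_i(1)-\Phi(\omega)_i(1)\to 0$ uniformly in $i$ relative to $\omega(1)$. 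Simplicity of $A$, together with $r_\infty\ge r_1=1$ being bounded, should show that $r_\infty$ is strictly positive and bounded. Then $\Phi\circ\Psi$ and $\Psi\circ\Phi$ are the identity: the point-evaluation contributions are pushed to deeper and deeper stages, and their mass fraction tends to zero by the uniform Cauchy property of $(r_i)$.

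I expect the main obstacle to be exactly this telescoping estimate. It must show that the point-evaluation parts added between stages $i$ and $i+t$ become negligible when tested on a fixed element of $D_i$, uniformly over traces. Proving it requires expressing the mass of the correction as $(r_{i+t}-r_i)$ paired with the $K_0$-state and invoking uniform convergence. Continuity and additivity of $\Phi$ and $\Psi$ on the cones are then routine, giving $\mathbb R^+T(D)\cong\mathbb R^+T(B)$.

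Finally, the cone isomorphism rescales the total mass by the function $r_\infty$ evaluated at the $K_0$-state of the trace. A trace in $T(D)$ maps to a functional of mass $1/r_\infty(\nu)$. If $r_\infty$ is constant, for instance when $A$ has a unique trace so that $S(K_0(A))$ is a point, dividing by that constant gives an affine homeomorphism $T(D)\cong T(B)$. Otherwise only the cones are identified.
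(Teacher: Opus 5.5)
Your outline is correct, but it takes a genuinely different route from the paper. The paper dualizes: it identifies $\textit{Aff}(T(B_i))$ and $\textit{Aff}(T(D_i))$ with $\bigoplus_k C_{\mathbb R}(X^{n_{i,k}})$, bounds $\psi^{**}_{i,i+t-1}-\phi^{**}_{i,i+t-1}$ in norm, and builds an approximate intertwining along a subsequence. It uses that identification directly when the limit of $(r_i)$ is constant, and the rescalings $\Delta_i$ otherwise, so continuity and bijectivity of the induced map come for free. You stay with unnormalized traces on a common cone, where $\psi_i^*=\phi_i^*+\epsilon_i^*$ with $\epsilon_i^*\geq 0$, and you get exact, mutually inverse maps as monotone norm limits. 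This gives explicit formulas and makes the mass rescaling by $r$ transparent, but the topology is no longer free. The norm of $\phi^*_{i,J-1}(\omega_J)-\phi^*_{i,J+u-1}(\omega_{J+u})$ is at most $\omega(1_D)\max_p\sum_k \frac{n_{J,k}}{n_{J+u,p}}|E_{J,J+u-1;k,p}|$. So the weak$^*$ continuity of $\Phi$, which you call routine, is exactly the uniform-in-$p$ form of \eqref{E:fundamentalLimit}, the same key estimate the paper proves. Likewise, in $\Psi\circ\Phi=\mathrm{id}$ you pair against finite-stage weights of a trace on $D$, not a state of $K_0(A)$. You therefore need componentwise control of the $r_{i,k}$, which is also how the paper uses the hypothesis. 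Your suggested proof of the estimate works as an inequality rather than an identity: since $r_J\geq\mathbf 1$ componentwise and $\Theta_{J,J+u-1}-\theta^{**}_{J,J+u-1}$ is positive, $(\Theta_{J,J+u-1}-\theta^{**}_{J,J+u-1})(\mathbf 1)\leq r_{J+u}-\theta^{**}_{J,J+u-1}(r_J)$. This is shorter than the paper's $g^{(s)}$ argument. Also, $\Phi\circ\Psi=\mathrm{id}$ needs only statewise convergence, and $\Psi\circ\Phi=\mathrm{id}$ then follows from injectivity of $\Phi$: $\kappa=\omega-\Psi\Phi(\omega)\geq 0$ has $\Phi(\kappa)=0$, while $\Phi(\kappa)(1_B)\geq\kappa(1_D)/\sup_{J,p}r_{J,p}$. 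Simplicity of $A$ is not needed for $r_\infty\geq 1$.
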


\begin{proof}
	We prove the first statement of the theorem first. It is enough to show that the diagrams
\begin{align}\label{E:diagram1}
	\textit{Aff}\big (T(B_1) \big ) \xrightarrow{\phi_1^{**}}  \textit{Aff}\big (T(B_2) \big ) \xrightarrow{\phi_2^{**}}\textit{Aff}\big (T(B_3) \big ) \xrightarrow{\phi_3^{**}} \cdots 
\end{align}
and 
\begin{align}\label{E:diagram2}
	\textit{Aff}\big (T(D_1) \big ) \xrightarrow{\psi_1^{**}}  \textit{Aff}\big (T(D_2) \big ) \xrightarrow{\psi_2^{**}}\textit{Aff}\big (T(D_3) \big ) \xrightarrow{\psi_3^{**}} \cdots
\end{align}
have an approximate intertwining as sequences of order unit Banach spaces when the sequence $(r_i)_{i\in \mathbb N}$ converges uniformly to a constant function. Note that when $A$ has a unique trace, $\textit{Aff}(S(K_0(A)))$ is a one-dimensional vector space so that, if $(r_i)_{i\in \mathbb N}$ converges uniformly, it necessarily converges to a constant function, i.e., a scalar multiple of the order unit. 

	For any $n \in \mathbb N$ and any compact Hausdorff space $Y$, we make the identification 
\begin{align*}
	\textit{Aff}\big (T\big (M_{n}(C(Y)) \big) \big ) \cong C_{\mathbb R}(Y)
\end{align*}
so that
\begin{align*}
	\textit{Aff}\big (T(B_i) \big ) \cong \bigoplus_{k=1}^{j_i} C_{\mathbb R}(X^{n_{i,k}}) \cong \textit{Aff}\big (T(D_i) \big ), \quad i \in \mathbb N
\end{align*}
(as order unit Banach spaces). It follows that the map 
\begin{align*}
	\phi_{i,i+t-1}^{**}\colon \bigoplus_{k=1}^{j_i} C_{\mathbb R}(X^{n_{i,k}}) \to \bigoplus_{k=1}^{j_{i+t}} C_{\mathbb R}(X^{n_{i+t,l}}), \quad t \in \mathbb N
\end{align*}
has the form
\begin{align*}
	\phi_{i,i+t-1}^{**}\big ((h_1,\dotsc,h_{j_i})\big ) = \bigoplus_{l=1}^{j_{i+t}} \sum_{k=1}^{j_i} \sum_{m=1}^{\theta_{i,i+t-1;k,l}} \frac{n_{i,k}}{n_{i+t,l}} h_k \circ \sigma_{k,l,m},
\end{align*}
where $\sigma_{k,l,m}$ is the projection of
\begin{align*}
	X^{n_{i+t,l}} = \underbrace{X^{n_{i,1}}\times \cdots \times X^{n_{i,1}}}_{\theta_{i,i+t-1;1,l}}\times \cdots \times \underbrace{X^{n_{i,j_i}}\times \cdots \times X^{n_{i,j_i}}}_{\theta_{i,i+t-1;j_i,l}}
\end{align*}
onto the $m$-th factor of $X^{n_{i,k}}$, $1\leq m \leq \theta_{i,i+t-1;k,l}$, $1\leq k \leq j_i$, $1 \leq l \leq j_{i+t}$. 

	Letting $\mu = \lambda_1 \mu_1+\cdots + \lambda_{j_{i+t}}\mu_{j_{i+t}} \in T(D_{i+t})$, notice that
\begin{align*}
	\psi_{i,i+t-1}^*(\mu) = \sum_{l=1}^{j_{i+t}} \sum_{k=1}^{j_i}\frac{\Tilde{n}_{i,k}\lambda_l}{\Tilde{n}_{i+t,l}}   \Big ( \sum_{m=1}^{\theta_{i,i+t-1;k,l}} \mu_{k,l,m} + \sum_{x \in E_{i,i+t-1;k,l}} \text{Tr}_{k,l,x} \Big ),
\end{align*}
where $\mu_{k,l,m}$ is defined as in Equation \eqref{E:traceImageParts}, $\text{Tr}_{k,l,x}(f) = f(x)$ for $f \in C(X^{n_{i,k}})$, and $E_{i,i+t-1;k,l} \subseteq X^{n_{i,k}}$ is a point evaluation set for the composed map $\psi_{i,i+t-1}$. It follows that the map $\psi_{i,i+t-1}^{**}$ has the form
\begin{align*}
	\psi_{i,i+t-1}^{**}\big ((h_1,\dotsc,h_{j_i})\big ) = \bigoplus_{l=1}^{j_{i+t}} \sum_{k=1}^{j_i} \frac{\Tilde{n}_{i,k}}{\Tilde{n}_{i+t,l}} \Big (\sum_{m=1}^{\theta_{i,i+t-1;k,l}}  h_k \circ \sigma_{k,l,m} + \sum_{x \in E_{i,i+t-1;k,l}} h_k(x) \Big ).
\end{align*}
Hence, writing $h = (h_1,\dotsc,h_{j_i})$, the $l$-th component of  $\psi_{i,i+t-1}^{**}(h)-\phi_{i,i+t-1}^{**}(h)$ is 
\begin{multline}\label{E:componentOfDifference}
	\big (\psi_{i,i+t-1}^{**}(h) - \phi_{i,i+t-1}^{**}(h)\big )_l = \\
	\sum_{k=1}^{j_i} \bigg ( \frac{\Tilde{n}_{i,k}}{\Tilde{n}_{i+t,l}}\sum_{m=1}^{\theta_{i,i+t-1;k,l}} h_k \circ \sigma_{k,l,m} + \frac{\Tilde{n}_{i,k}}{\Tilde{n}_{i+t,l}}\sum_{x \in E_{i,i+t-1;k,l}} h_k(x)  - \frac{n_{i,k}}{n_{i+t,l}} \sum_{m=1}^{\theta_{i,i+t-1;k,l}}h_k \circ \sigma_{k,l,m}\bigg ). 
\end{multline}

	
	Let $(r_i)_{i\in \mathbb N}$ converge to the constant function $r \mathbf1$, where $r \geq 1$ since $(r_i)$ is increasing. It follows that given a small parameter $\epsilon > 0$, for sufficiently large $i$, $|r_{i,k} - r| < \epsilon$ for each $1 \leq k \leq j_i$, where $r_{i,k}$ denotes the $k$-th component of $r_i$. Furthermore, also because $(r_i)$ is increasing, $r_{i,k} \leq r$ for each $1 \leq k \leq j_i$, $i\in \mathbb N$; hence for any $i \in \mathbb N$, there exists a $t \in \mathbb N$ such that $r_{i,k} \leq r_{i+t,l}$ for each $1 \leq k \leq j_i$ and $1 \leq l \leq j_{i+t}$. 
	
	From Equation \eqref{E:componentOfDifference}, we now see that, for $\|h\| \leq 1$, 
\begin{align*}
	\big \|  \big (\psi_{i,i+t-1}^{**}(h) - \phi_{i,i+t-1}^{**}(h)\big )_l \big \| \leq \sum_{k=1}^{j_i} \bigg | \frac{r_{i,k}}{r_{i+t,l}} -1 \bigg | \frac{n_{i,k}}{n_{i+t,l}} \theta_{i,i+t-1;k,l} + \sum_{k=1}^{j_i}\frac{\Tilde{n}_{i,k}}{\Tilde{n}_{i+t,l}}|E_{i,i+t-1;k,l}|
\end{align*}
so that, for sufficiently large $t$ and $\|h\| \leq 1$, 
\begin{align}\label{E:differenceOfFunctions}
	\big \|  \big (\psi_{i,i+t-1}^{**}(h) - \phi_{i,i+t-1}^{**}(h)\big )_l \big \| &\leq \sum_{k=1}^{j_i} \bigg ( 1-\frac{r_{i,k}}{r_{i+t,l}} \bigg )\frac{n_{i,k}}{n_{i+t,l}} \theta_{i,i+t-1;k,l} + \sum_{k=1}^{j_i}\frac{\Tilde{n}_{i,k}}{\Tilde{n}_{i+t,l}}|E_{i,i+t-1;k,l}|\\  \notag &= 1-\sum_{k=1}^{j_i}\frac{\Tilde{n}_{i,k}}{\Tilde{n}_{i+t,l}}\theta_{i,i+t-1;k,l}+ \sum_{k=1}^{j_i}\frac{\Tilde{n}_{i,k}}{\Tilde{n}_{i+t,l}}|E_{i,i+t-1;k,l}|\\ \notag &= 2\sum_{k=1}^{j_i}\frac{r_{i,k}}{r_{i+t,l}}\frac{{n}_{i,k}}{{n}_{i+t,l}}|E_{i,i+t-1;k,l}|\\ \notag &\leq 2\sum_{k=1}^{j_i}\frac{{n}_{i,k}}{{n}_{i+t,l}}|E_{i,i+t-1;k,l}|, \notag
\end{align}
where we have used the identities 
\begin{align*}
	\sum_{k=1}^{j_i} n_{i,k} \theta_{i,i+t-1;k,l} = n_{i+t,l}, \quad \sum_{k=1}^{j_i} \Tilde{n}_{i,k} \big (\theta_{i,i+t-1;k,l}+|E_{i,i+t-1;k,l}|\big) = \Tilde{n}_{i+t,l}
\end{align*}
and the fact that $r_{i,k} \leq r_{i+t,l}$ for each $1 \leq k \leq j_i$ and $1 \leq l \leq j_{i+t}$. 

	Now we claim that 
\begin{align}\label{E:fundamentalLimit}
	\lim_{i\to\infty} \lim_{t\to\infty} \sum_{k=1}^{j_i} \frac{n_{i,k}}{n_{i+t,l}}|E_{i,i+t-1;k,l}| = 0. 
\end{align}
To see this, start by considering the sequence $(g_i^{(s)})_{i\in \mathbb N} \subseteq \textit{Aff}(S(K_0(A)))$ obtained by replacing the first $s$ terms of the sequence $(r_i)_{i\in \mathbb N}$ by the order units; specifically, let $(g_i^{(1)})_{i\in \mathbb N} = (r_{i})_{i\in \mathbb N}$, and for each $s > 1$, let $g_1^{(s)} = r_1$ and 
\begin{align*}
	g_{i}^{(s)} = \begin{cases}	\theta^{**}_{1,i-1}(r_1) = \theta^{**}_{i-1}(g_{i-1}^{(s)}),& 1 < i \leq s\\
	\Theta_{s,i-1}(g_s^{(s)}) = \Theta_{i-1}(g_{i-1}^{(s)}),&  i > s
	\end{cases}.
\end{align*}
By the positivity of $\Theta_{i,i+t-1}-\theta_{i,i+t-1}^{**}$ (and that of $\Theta_{i,i+t-1}$ itself), we see for any $s \in\mathbb N$,
\begin{align*}
	g_{i}^{(1)} - g_i^{(s)} = \Theta_{s,i-1}\big ( (\Theta_{1,s-1}-\theta_{1,s-1}^{**})  (r_1) \big ) \geq 0
\end{align*}
so that $g_{i}^{(1)} \geq g_i^{(s)}$ for each $i \in \mathbb N$. Then, because 
\begin{align*}
	\lim_{i\to\infty} g_i^{(1)} = g_1 = r\mathbf 1 \in \textit{Aff}(S(K_0(A)))
\end{align*} 
by hypothesis and $(g_i^{(s)})_{i\in \mathbb N}$ is increasing (apply the positive map $\Theta_i-\theta_i^{**}$ to $g_{i}^{(s)}$ to see this), $(g_i^{(s)})_{i\in \mathbb N}$ converges to an element $g_s \in \textit{Aff}(S(K_0(A)))$ such that $g_s \leq g_1$. 



	Furthermore, the sequence $(g_s)_{s\in \mathbb N}$ converges to the order unit $\mathbf 1 \in \textit{Aff}(S(K_0(A)))$. Indeed, fixing $s \in \mathbb N$, identify $g_s - \mathbf 1$ with the sum 
\begin{align}\label{E:differenceSum}
	\sum_{i=1}^\infty (g_{i+1}^{(s)} - g_{i}^{(s)}) = \sum_{i=s}^\infty (g_{i+1}^{(s)} - g_{i}^{(s)}).
\end{align}
Notice for $i > s$, $g_{i+1}^{(s)} - g_i^{(s)} \leq g_{i+1}^{(1)} - g_i^{(1)}$ since
\begin{align*}
	g_{i+1}^{(1)} - g_i^{(1)} - (g_{i+1}^{(s)} - g_i^{(s)}) =(\Theta_i-\theta_i^{**}) \big ( \Theta_{s,i-1} ( (\Theta_{1,s-1}-\theta^{**}_{1,s-1})(r_1)) \big )\geq 0;
\end{align*}
a similar formula shows $g_{s+1}^{(s)} - g_s^{(s)} \leq g_{s+1}^{(1)} - g_s^{(1)}$. Now given $\epsilon > 0$, let $s'$ be such that 
\begin{align*}
\big \|\sum_{i=s'}^{\infty} g_{i+1}^{(1)} - g_{i}^{(1)}\big \| < \epsilon. 
\end{align*} 
Then by Equation \eqref{E:differenceSum} and the fact that $g_{i+1}^{(s')} - g_i^{(s')} \leq g_{i+1}^{(1)} - g_i^{(1)}$ for $i \geq s'$, we have $\|g_{s'}-\mathbf 1\| < \epsilon$. 

	
	Hence, for any $\epsilon > 0$, there exists $i \in \mathbb N$ such that
\begin{align*}
	\epsilon > \|g_i - \mathbf 1\| = \lim_{t\to\infty}  \| g_{i+t-1}^{(i)} - \theta^{**}_{1,i+t-1}(r_1) \| = \lim_{t\to \infty}\|  (\Theta_{i,i+t-1}- \theta^{**}_{i,i+t-1})(g_{i}^{(i)}) \|, 
\end{align*}
and since $g_{i}^{(i)}$ is just the order unit for $\textit{Aff}(S(\mathbb Z^{j_i}))$,
\begin{align*}
	\big ((\Theta_{i,i+t-1}- \theta^{**}_{i,i+t-1})(g_{i}^{(i)})\big )_l = \sum_{k=1}^{j_i} \frac{n_{i,k}}{n_{i+t,l}}|E_{i,i+t-1;k,l}|
\end{align*}
so that Equation \eqref{E:fundamentalLimit} holds. 

	Thus, it follows from Equation \eqref{E:differenceOfFunctions} that there is a sequence of natural numbers $(s_i)_{i\in \mathbb N}$ such that
\begin{align*}
	\big \| \psi_{s_i,s_{i+2}-1}^{**}(h) - \phi_{s_i,s_{i+2}-1}^{**}(h)\big \| < 2^{-i}, \quad i =2k-1,\ k \in \mathbb N, \quad \|h\| \leq 1
\end{align*} 
so that the diagram 
\[\begin{tikzcd}
	{\textit{Aff}\big (T(B_{s_1}) \big )} & {\textit{Aff}\big (T(B_{s_2})\big )} & {\textit{Aff}\big (T(B_{s_3})\big )} & \cdots & \\
	& {\textit{Aff}\big (T(D_{s_2}) \big )} & {\textit{Aff}\big (T(D_{s_3}) \big )} & {\textit{Aff}\big (T(D_{s_4}) \big )} & \cdots
	\arrow["{\phi_{s_1,s_2-1}^{**}}", from=1-1, to=1-2]
	\arrow["{\psi_{s_1,s_2-1}^{**}}"{pos=0.6}, shift right, from=1-1, to=2-2]
	\arrow["{\phi_{s_2,s_3-1}^{**}}", from=1-2, to=1-3]
	\arrow["{\phi_{s_3,s_4-1}^{**}}", from=1-3, to=1-4]
	\arrow["{\psi_{s_3,s_4-1}^{**}}"{pos=0.7}, from=1-3, to=2-4]
	\arrow["{\psi_{s_2,s_3-1}^{**}}"{pos=0.3}, shift right, from=2-2, to=1-3]
	\arrow["{\psi_{s_2,s_3-1}^{**}}"', from=2-2, to=2-3]
	\arrow["{\psi_{s_3,s_4-1}^{**}}"', from=2-3, to=2-4]
	\arrow["{\psi_{s_4,s_5-1}^{**}}"', from=2-4, to=2-5]
\end{tikzcd}\]
approximately commutes. We have now shown that the diagrams \eqref{E:diagram1} and \eqref{E:diagram2} have an approximate intertwining so that $T(B) \cong T(D)$ in the case that the sequence $(r_i)$ converges to a constant function. 

	To prove the second statement of the theorem, it is enough to show that the diagrams \eqref{E:diagram1} and \eqref{E:diagram2} have an approximate intertwining as sequences of ordered Banach spaces (i.e., we no longer assume that the intertwining morphisms preserve order units). 
	
	By Equation \eqref{E:fundamentalLimit}, there is a sequence of natural numbers $(s_i)_{i\in \mathbb N}$ such that 
\begin{align}\label{E:inequalityForIntertwining}
	\sum_{k=1}^{j_{s_i}} \frac{n_{s_i,k}}{n_{s_{i+1},l}}|E_{s_i,s_{i+1}-1;k,l}| < 2^{-i}\|r\|^{-1},\quad 1 \leq l \leq j_{s_{i+1}}. 
\end{align}
Now, for each $i \in \mathbb N$, consider the isomorphism of ordered Banach spaces
\begin{align*}
	\Delta_i\colon \textit{Aff}(T(B_{s_i}) ) \to \textit{Aff}(T(D_{s_i}) ),\quad  (h_1,\dotsc,h_{s_i}) \mapsto (r_{s_i,1}^{-1}h_1,\dotsc,r_{s_i,j_{s_i}}^{-1}h_{s_i})
\end{align*}
with $\Delta_i^{-1}(h_1,\dotsc,h_{s_{i}}) = (r_{s_i,1}h_1,\dotsc,r_{s_i,j_{s_i}}h_{s_i})$. Then for $\|h\| \leq 1$, a calculation shows that 
\begin{align*}
	\big \|\big (\Delta_{i+1}\big (\phi_{s_i,s_{i+1}-1}^{**}(h) \big ) - \psi_{s_i,s_{i+1}-1}^{**}\big ( \Delta_i (h)\big ) \big )_{l}\big \| &= \big \|\sum_{k=1}^{j_{s_i}} \sum_{x \in E_{s_i,s_{i+1}-1;k,l}} \frac{n_{s_i,k}}{\Tilde{n}_{s_{i+1},l}}h_k(x) \big \| \\ &\leq \sum_{k=1}^{j_{s_i}} \frac{n_{s_i,k}}{\Tilde{n}_{s_{i+1},l}}|E_{s_i,s_{i+1}-1;k,l}|\\ &\leq \sum_{k=1}^{j_{s_i}} \frac{n_{s_i,k}}{{n}_{s_{i+1},l}}|E_{s_i,s_{i+1}-1;k,l}|\\ &< 2^{-i}\|r\|^{-1} < 2^{-i},
\end{align*}
where the second last inequality follows from Equation \eqref{E:inequalityForIntertwining}; similarly, 
\begin{align*}
	\big \|\big (\Delta_{i+1}^{-1}\big (\psi_{s_i,s_{i+1}-1}^{**}(h) \big ) - \phi_{s_i,s_{i+1}-1}^{**}\big ( \Delta_i^{-1} (h)\big ) \big )_{l}\big \| &= \big \|\sum_{k=1}^{j_{s_i}} \sum_{x \in E_{s_i,s_{i+1}-1;k,l}} \frac{\Tilde{n}_{s_i,k}}{{n}_{s_{i+1},l}}h_k(x) \big \| \\ &\leq \sum_{k=1}^{j_{s_i}} \frac{\Tilde{n}_{s_i,k}}{{n}_{s_{i+1},l}}|E_{s_i,s_{i+1}-1;k,l}| \\ &= \sum_{k=1}^{j_{s_i}} r_{s_i,k} \frac{{n}_{s_i,k}}{{n}_{s_{i+1},l}}|E_{s_i,s_{i+1}-1;k,l}| \\  &\leq \|r\|\sum_{k=1}^{j_{s_i}}  \frac{{n}_{s_i,k}}{{n}_{s_{i+1},l}}|E_{s_i,s_{i+1}-1;k,l}| \\ &< 2^{-i}, 
\end{align*}
where the second last inequality follows from the fact that $(r_i)$ is increasing. We thus have the desired intertwining. 
\end{proof}

	We can now apply Theorem \ref{T:main} to certain AF-Villadsen algebras from \cite{gaEzN25-Vill}. 

\begin{cor}\label{C:simpleAFVill}
	In the setting above, if the sequence $(r_i)_{i\in \mathbb N}$ converges uniformly and $A$ has a unique trace, then $T(D)$ is the Poulsen simplex. 
\end{cor}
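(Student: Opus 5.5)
The plan is to combine Theorem \ref{T:traceIntertwining} with Corollary \ref{C:main}, or rather with Theorem \ref{T:main} applied to $B$ in the commutative setting. Since $A$ has a unique trace, the uniform limit of $(r_i)_{i\in\mathbb N}$ in $\textit{Aff}(S(K_0(A)))$ is automatically a constant function, because $\textit{Aff}(S(K_0(A)))$ is one-dimensional. The first statement of Theorem \ref{T:traceIntertwining} then gives an affine homeomorphism $T(D)\cong T(B)$. So it suffices to show that $T(B)$ is the Poulsen simplex, since being the Poulsen simplex is invariant under affine homeomorphism.

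For $T(B)$, write $\nu$ for the unique trace of $A$, so that $\partial T(A)=\{\nu\}$. By Lemma \ref{L:traceExtends}, every trace on $B$ restricts to a trace on $A$, hence to $\nu$. Therefore $F_\nu=T(B)$, and it remains to see that $F_\nu$ is the Poulsen simplex.

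Theorem \ref{T:main} as stated requires $C_0$ to be noncommutative, whereas here $C_0=C(X)$. So I would not cite it verbatim. Instead I would check that its proof never used noncommutativity, only the following facts, each of which holds here:
\begin{itemize}
\item $C_0$ is nuclear and unital. This holds for $C(X)$.
\item $T(C_0)$ has more than one point. This holds because $X$ is not a single point, so distinct point masses give distinct traces.
\item $A$ is simple. This is assumed in the setting above.
\item Tensor products of extreme traces are extreme. For $C(X)$ this is concrete: extreme traces on $C(X^{n})$ are point evaluations, and a product of point masses is a point mass. It also follows from nuclearity as in the paper.
\end{itemize}
With these in hand, the Krein--Milman averaging argument of Theorem \ref{T:main}, via Lemmas \ref{L:kM} and \ref{L:limitOfExtremeTraces}, goes through unchanged. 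It shows $\overline{\partial F_\nu}=F_\nu$. Moreover $F_\nu$ is not a single point: take two distinct extreme traces of $C_0$ and propagate them through the construction of Lemma \ref{L:traceExtends}; the resulting traces of $B$ differ at stage $1$. Hence $F_\nu=T(B)$ is the Poulsen simplex.

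Alternatively, one can cite \cite[Theorem 4.5]{gaEzN25-Vill}/\cite{gaEcgLzN24}-type results for commutative Villadsen algebras. I would prefer the self-contained route above.

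The main obstacle is this commutative case of Theorem \ref{T:main}. Its hypothesis says ``noncommutative'', so the write-up must explicitly point out that the argument only used nuclearity and $|T(C_0)|>1$. Everything else is a direct transfer through $T(D)\cong T(B)$.
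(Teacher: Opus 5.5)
Your proposal is correct and follows the paper's intended argument. The first part of Theorem \ref{T:traceIntertwining} gives $T(D)\cong T(B)$, and Corollary \ref{C:main} then shows $T(B)=F_\nu$ is the Poulsen simplex; as you rightly point out, its proof uses only nuclearity, simplicity of $A$, and $|T(C_0)|>1$, never noncommutativity, which the paper applies to $C_0=C(X)$ without comment. One cosmetic point: every trace on $B$ restricts to $\nu$ simply because $A\subseteq B$ is a unital subalgebra, whereas Lemma \ref{L:traceExtends} supplies the converse direction, which is what your nonemptiness and non-singleton arguments actually rely on.
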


	Finally, we note that Theorem \ref{T:main} holds for nonnuclear seed algebras as well. Indeed, we only used the nuclearity of $C_0$ to justify that the product of extreme traces is extreme, in particular that (using the notation in the proof of Theorem \ref{T:main}) the trace
\begin{align*}
	\eta^{(i'+t'+t)}_l=  \bigotimes_{k=1}^{j_{i'+t'}} \big (\eta_{k}^{(i'+t')}\big )^{\otimes \theta_{i'+t',i'+t'+t-1;k,l}}
\end{align*}
is extreme for each $1 \leq l \leq j_{i'+t'+t}$, for every $t \in \mathbb N$. However, it was shown in \cite[Theorem 2.1]{cIdK19} that for any unital C*-algebras $A_1$ and $A_2$ and extreme traces $\tau_1 \in T(A_1)$ and $\tau_2 \in T(A_2)$, $\tau_1\otimes_{\beta} \tau_2$ is extreme in $T(A_1\otimes_\beta A_2)$ for any C*-norm $\beta$. We neglected to consider arbitrary unital seed algebras above because in this case the notational complexity needed to make our constructions precise would hinder one's understanding of our main result (i.e., what is lost due to notational complexity outweighs what might be gained by an increase in generality). 

\printbibliography

\end{document}